\documentclass[11pt]{amsart}

\usepackage{graphicx}
\usepackage[USenglish,english]{babel}
\usepackage[T1]{fontenc}
\usepackage[latin1]{inputenc}
\usepackage{amsfonts}
\usepackage{amssymb}
\usepackage{amsthm}
\usepackage{amsmath}
\usepackage{tikz}
\usetikzlibrary{decorations.pathreplacing}
\usepackage{float}
\usepackage{enumerate}
\usepackage{accents}
\usepackage{mathtools}
\usepackage{mathrsfs}
\usepackage{comment}
\usepackage{afterpage}
\usepackage{bbm}
\usepackage{dsfont}
\usepackage{stmaryrd}
\usepackage{hyperref}
\usepackage{mleftright}
\usepackage{subfig}
\usepackage{faktor}

\theoremstyle{plain}
\newtheorem{thm}{Theorem}[section]
\newtheorem{lem}[thm]{Lemma}
\newtheorem{prop}[thm]{Proposition}
\newtheorem{cor}[thm]{Corollary}
\newtheorem*{thm*}{Main Theorem}
\newtheorem*{prop*}{Proposition}
\newtheorem*{cor*}{Corollary}
\newtheorem{thmintro}{Theorem}

\theoremstyle{definition}
\newtheorem{mydef}[thm]{Definition}
\newtheorem{exam}[thm]{Example}
\newtheorem{rem}[thm]{Remark}
\newtheorem*{quest*}{Question}

\DeclareMathOperator{\Ima}{Im}

\title{M\"obius structures, quasi-metrics and completeness}

\author{Merlin Incerti-Medici}
\address{Institut f\"ur Mathematik, Universit\"at Z\"urich, Switzerland}
\email{merlin.incerti-medici@math.uzh.ch}

\begin{document}

\maketitle

\begin{abstract}
We study cross ratios from an axiomatic viewpoint, also known as the study of M\"obius spaces. We characterise cross ratios induced by quasi-metrics in terms of topological properties of their image. Furthermore, we generalise the notions of Cauchy-sequences and completeness to M\"obius spaces and prove the existence of a unique completion under an extra assumption that, again, can be expressed in terms of the image of the cross ratio.
\end{abstract}

\tableofcontents




\section{Introduction} \label{sec:Introduction}

Let $Z$ be a set, $\rho$ a metric on $Z$, possibly with a point at infinity (see section \ref{sec:Preliminaries} for definitions). We can define the cross ratio induced by $\rho$ with the formula
\begin{equation} \label{eq:CrossRatio}
cr(z_1, z_2, z_3, z_4) := \frac{\rho(z_1, z_2) \rho(z_3, z_4)}{\rho(z_1, z_3) \rho(z_2, z_4)},
\end{equation}
where the quotient of any two infinite distances equals $1$, i.e.\,infinite distances cancel each other. Provided that no three points in the quadruple $(z_1, \dots, z_4)$ coincide, this yields a well-defined number in $[0, \infty]$.

Cross ratios arise naturally in the study of negatively curved spaces: If $X$ is a $\mathrm{CAT(-1)}$ space, we can define its boundary at infinity, which can be endowed with a family of metrics $\{ \rho_x \}_{x \in X}$, called visual metrics. It is a classical result by Bourdon that, for a $\mathrm{CAT(-1)}$ space, all visual metrics induce the same cross ratio on the boundary. Therefore, the cross ratio provides us with an intrinsic geometric structure on the boundary at infinity. This allows us to think about the pair $(\partial X, cr)$ as a topological space with a geometric structure of its own, which leads to the study of cross ratios from an axiomatic viewpoint (see for example \cite{Hamenstadt97, Buyalo}). In this context, cross ratios are also referred to as M\"obius structures and a set equipped with a M\"obius structure will be called a M\"obius space.

In \cite{Buyalo}, Buyalo showed how M\"obius structures give rise to a topology, called M\"obius topology. Furthermore, he showed that every M\"obius structure is induced by a semi-metric, i.e.\,every M\"obius structure arises from formula (\ref{eq:CrossRatio}) if $\rho$ is a semi-metric, that is, it satisfies the same properties as a metric, except for the triangle inequality. Between semi-metrics and metrics there is the notion of a $K$-quasi-metric, which satisfies a weak triangle inequality (see section \ref{sec:Preliminaries} for precise definitions). Quasi-metrics are of particular interest in the study of cross ratios because of involutions. Given a metric $\rho$, its involution at a point $o \in Z$ is defined by
\[ \rho_{o}(z, z') = \frac{Ê\rho(z,z') }{ \rho(z, o) \rho(o, z') }. \]
A direct computation shows that $\rho_o$ induces the same cross ratio as $\rho$. However, if $\rho$ is a metric, the map $\rho_o$ may no longer be a metric which leads to technical complications when studying cross ratios purely from a metric point of view. Quasi-metrics have the advantage that, given a quasi-metric $\rho$, the involution $\rho_o$ is again a quasi-metric (cf. Proposition 5.3.6 in \cite{BS}). Quasi-metrics are weaker than metrics in many ways. For example, they do not enjoy the same continuity properties as metrics, as we will see in example \ref{exam:Continuity}. However, M\"obius structures induced by quasi-metrics have several nice topological features, which, together with the observation on involutions above, motivates their study.\\

In this paper, we provide a characterisation of those M\"obius structures that are induced by quasi-metrics in terms of the image of the cross ratio. We then study the M\"obius topology introduced by Buyalo and show that, if the cross ratio is induced by a metric, the metric topology and the M\"obius topology coincide. Finally, if a M\"obius structure is induced by a quasi-metric that satisfies an additional symmetry condition, we can define the notion of Cauchy-sequences for such a M\"obius structure. The main results of this paper are the following:

\begin{thmintro} \label{thmintro:MobiusvsMetricTopology}
Let $(Z, \rho)$ be a metric space, $M$ the M\"obius structure induced by $\rho$. Denote the metric topology induced by $\rho$ by $\mathcal{T}_{\rho}$ and the M\"obius topology induced by $M$ by $\mathcal{T}_{M}$. Then $\mathcal{T}_{\rho} = \mathcal{T}_{M}$.
\end{thmintro}

\begin{thmintro} \label{thmintro:MobiusvsMetricCompleteness}
Let $(Z, \rho)$ be a (possibly extended) metric space and denote the induced M\"obius structure by $M$. Equivalent are:
\begin{enumerate}
\item[1)] $(Z, M)$ is complete as a M\"obius space.
\item[2)] $(Z, \rho)$ is complete as a metric space and is either bounded or has a point at infinity.
\end{enumerate}
\end{thmintro}

\begin{thmintro} \label{thmintro:Completion}
Let $(Z, M)$ be a M\"obius space that satisfies the (symmetry)-condition. Then there exists a complete M\"obius space $(\overline{Z}, \overline{M})$ with a M\"obius embedding $i_Z : Z \hookrightarrow \overline{Z}$ such that $i_Z(Z)$ is dense in $\overline{Z}$.

Furthermore, if $(Z', M')$ is a complete M\"obius space with a M\"obius embedding $i : Z \hookrightarrow Z'$ such that $i(Z)$ is dense in $Z'$, then there exists a unique M\"obius equivalence $f : \overline{Z} \rightarrow Z'$ such that $i = f \circ i_Z$.
\end{thmintro}

The rest of the paper is organised as follows. In section \ref{sec:Preliminaries}, we give precise definitions for the terminology we will require. In section \ref{sec:MobiusStructuresQuasiMetrics}, we show the characterisation of M\"obius structures induced by quasi-metrics. In section \ref{sec:MobiusTopology}, we review Buyalos definition of the M\"obius topology and prove Theorem \ref{thmintro:MobiusvsMetricTopology}. In section \ref{sec:CauchySequences}, we introduce Cauchy-sequences and prove Theorem \ref{thmintro:MobiusvsMetricCompleteness}. In section \ref{sec:Completion}, we construct the completion and prove Theorem \ref{thmintro:Completion}\\
\\
{\bf Acknowledgments} The author is grateful to Viktor Schroeder for many discussions and helpful advice.




\section{Preliminaries} \label{sec:Preliminaries}

Let $Z$ be a set, $\rho : Z \times Z \rightarrow \mathbb{R}$ a map. We say that $\rho$ is a {\it semi-metric} if it is symmetric, non-negative and $\rho(z,z') = 0$ if and only if $z = z'$. We say that $\rho$ is a {\it $K$-quasi-metric}, where $K \geq 1$, if it is a semi-metric and for all $x, y, z \in Z$, we have $\rho(x,z) \leq K \max(\rho(x,y), \rho(y,z))$. Finally, we say $\rho$ is a {\it metric} if it is a semi-metric and for all $x, y, z \in Z$, we have $\rho(x,z) \leq \rho(x,y) + \rho(y,z)$. Generalizing the definition of a metric, we say that $\rho : Z \times Z \rightarrow [0, \infty]$ is an {\it extended metric} if there exists exactly one point $\omega \in Z$, such that for all $x \in Z \diagdown \{ \omega \}$, $\rho(x, \omega) = \infty$, $\rho( \omega, \omega) = 0$ and the restriction of $\rho$ to $Z \diagdown \{ \omega \} \times Z \diagdown \{ \omega \}$ is a metric. We call $\omega$ the {\it point at infinity} with respect to $\rho$. A motivating example for this notion is the Riemannian sphere, seen as the union $\mathbb{C} \cup \{ \infty \}$. We define the notions of extended semi-metrics and extended $K$-quasi-metrics analogously.

We call an $n$-tuple $(z_1, \dots, z_n) \in Z^n$ {\it non-degenerate} if and only if for all $i \neq j$, we have $z_i \neq z_j$.

Given a semi-metric $\rho$, we can define a cross ratio. The cross ratio will be defined on admissible quadruples.

\begin{mydef}
A quadruple $(z_1, z_2, z_3, z_4) \in Z^4$ is admissible if there exists no triple $i \neq j \neq k \neq i$ such that $z_i = z_j = z_k$. We denote the set of admissible quadruples by $\mathcal{A}$.
\end{mydef}

We define the cross ratio induced by $\rho$ as follows:
\[ \forall (z_1, z_2, z_3, z_4) \in \mathcal{A} : cr(z_1, z_2, z_3, z_4) := \frac{ \rho(z_1, z_2) \rho(z_3, z_4) }{ \rho(z_1, z_3) \rho(z_2, z_4) } \in [0, \infty]. \]

Admissible quadruples are exactly those quadruples, for which the expression above does not yields division of zero by zero for any permutation of the points $z_i$.

We also define the cross ratio triple. Denote
\[Ê\Delta := \{ (a : b : c) \in \mathbb{R}P^2 \vert a, b, c > 0 \}, \]
\[ \overline{\Delta} := \Delta \cup \{ (0: 1 : 1), (1: 0: 1), (1 : 1 : 0) \}.\]
The cross ratio triple induced by $\rho$ is a map $crt : \mathcal{A} \rightarrow \overline{\Delta}$ defined by
\[ crt(z_1, z_2, z_3, z_4) := ( \rho(z_1, z_2) \rho(z_3, z_4) : \rho(z_1, z_3) \rho(z_2, z_4) : \rho(z_1, z_4) \rho(z_2, z_3) ). \]

Admissible quadruples are exactly those quadruples, for which at most one entry of the cross ratio triple is zero.\\

We can generalise these definitions to extended semi-metrics by using the following convention. Let $\omega \in Z$ be the point at infinity with respect to $\rho$. Fractions of the form $\frac{ \rho(\omega, z) }{ \rho(\omega, z') }$ for $z, z' \in Z \diagdown \{ \omega \}$ can be replaced by $1$, based on the principle that `infinite distances cancel each other'. In other words, if $z_1, z_2, z_3 \in Z \diagdown \{ \omega \}$, then
\begin{equation*}
\begin{split}& cr(z_1, z_2, z_3, \omega) = \frac{ \rho(z_1, z_2) }{ \rho(z_1, z_3) },\\
& cr(z_1, z_2, \omega, \omega) = 0,\\
& cr(z_1, \omega, \omega, z_2) = 1,\\
& crt(z_1, z_2, z_3, \omega) = ( \rho(z_1, z_2) : \rho(z_1, z_3) : \rho(z_2, z_3) ),\\
& crt(z_1, z_2, \omega, \omega) = ( 0 : 1 : 1 ).
\end{split}
\end{equation*}

It turns out that the maps $cr$ and $crt$ determine each other. If $crt(z_1, z_2, z_3, z_4) = (a : b : c)$, then $cr(z_1, z_2, z_3, z_4) = \frac{a}{b}$. On the other hand, if we denote
\[ cr(z_1, z_3, z_4, z_2) := \alpha,Ê\]
\[ cr(z_1, z_4, z_2, z_3) := \beta,Ê\]
\[ cr(z_1, z_2, z_3, z_4) := \gamma,Ê\]
then
\[ crt(z_1, z_2, z_3, z_4) = \left( \gamma^{\frac{1}{3}} \beta^{-\frac{1}{3}} : \alpha^{\frac{1}{3}} \gamma^{-\frac{1}{3}} : \beta^{\frac{1}{3}} \alpha^{-\frac{1}{3}} \right).Ê\]

In order to study the properties of the cross ratio, it is sometimes useful to reformulate the cross ratio in an additive manner. Denote
\[Ê\overline{L_4} := \{ (x,y,z) \in \mathbb{R}^3 \vert x + y + z = 0 \} \cup \{ (0, \infty, -\infty), (-\infty, 0, \infty), (\infty, - \infty, 0) \}. \]
We define the cross difference $M : \mathcal{A} \rightarrow \overline{L_4}$ induced by $\rho$ to be
\[ M(z_1, z_2, z_3, z_4) := \left( \ln \left( cr(z_1, z_3, z_4, z_2) \right), \ln \left( cr(z_1, z_4, z_2, z_3) \right), \ln\left( cr(z_1, z_2, z_3, z_4) \right) \right).Ê\]
The maps $M$ and $cr$ determine each other.\\

We end this section with a construction that allows us to construct different semi-metrics that induce the same cross ratio. Let $\rho$ be an extended semi-metric and let $o \in Z$ be a point such that for all $z \neq o$, $\rho(z,o) > 0$. We define the {\it involution of $\rho$ at $o$} by
\[ \rho_o(x,y) := \frac{ \rho(x,y)Ê}{ \rho(x,o) \rho(o,y) }. \]
Note that $o$ lies at infinity with respect to $\rho_o$ and, if $\omega$ is a point at infinity with respect to $\rho$, then
\[ \rho_o(x,\omega) = \frac{1}{\rho(x,o)}.Ê\]

Note that, if $\rho$ was an extended semi-metric, then $\rho_o$ is again an extended semi-metric. In \cite{BS}, Proposition 5.3.6, Buyalo-Schroeder prove that for any extended $K$-quasi-metric $\rho$, its involution $\rho_o$ is a $K'^2$-quasi-metric for some $K' \geq K$. A direct computation shows that $\rho$ and $\rho_o$ induce the same cross ratio.


\section{M\"obius structures and quasi-metrics} \label{sec:MobiusStructuresQuasiMetrics}

Consider the ordered triple $((12)(34), (13)(42), (14)(23))$. The symmetric group of four elements $\mathcal{S}_4$ acts on this triple by permuting the numbers 1-4. Whenever $\sigma \in \mathcal{S}_4$ acts on the numbers, it induces a permutation on the triple. Define $\varphi(\sigma) \in \mathcal{S}_3$ to be the permutation on the triple induced by the action of $\sigma$. It is easy to check that $\varphi : \mathcal{S}_4 \rightarrow \mathcal{S}_3$ is a group homomorphism. One can interpret the expression $(12)(34)$ to denote two opposite edges of a tetrahedron whose corners are labeled by the numbers 1-4. In this interpretation, $\varphi$ is the group homomorphism that sends a permutation of the corners to the induced permutation of pairs of opposite edges.\\

Let $Z$ be a set with at least three points. For any semi-metric, denote its set of admissible quadruples by $\mathcal{A}$ (recall that all semi-metrics have the same admissible quadruples). We can now define a cross ratio axiomatically.

\begin{mydef} \label{def:GeneralizedMobiusStructure}

Let $Z$ be a set with at least three points. A map $M : \mathcal{A} \rightarrow \overline{L_4}$ is called a {\it  M\"obius structure} if and only if it satisfies the following conditions:

\begin{enumerate}

\item[1)] For all $P \in \mathcal{A}$ and all $\pi \in \mathcal{S}_4$, we have

\[ M(\pi P) = sgn(\pi)\varphi(\pi)M(P). \]

\item[2)] For $P \in \mathcal{A}$, $M(P) \in L_4$ if and only if $P$ is non-degenerate.

\item[3)] For $P = (x,x,y,z)$, we have $M(P) = (0, \infty, -\infty)$.

\item[4)] Let $(x,y, \omega, \alpha, \beta)$ be an admissible $5$-tuple $(x, y, \omega, \alpha, \beta)$ such that $(\omega, \alpha, \beta)$ is a non-degenerate triple, $\alpha \neq x \neq \beta$ and $\alpha \neq y \neq \beta$. Then, there exists some $\lambda = \lambda(x,y,\omega,\alpha,\beta) \in \mathbb{R} \cup \{ \pm \infty \}$ such that

\[ M(\alpha x \omega \beta) + M(\alpha \omega y \beta) - M(\alpha x y \beta) = (\lambda, -\lambda, 0). \]

Moreover, when $(\omega, \alpha, \beta)$ is non-degenerate, $x \neq \beta$ and $y \neq \alpha$, the first component of the left-hand-side expression is well-defined. Analogously, the second component of the left-hand-side expression is well-defined when $(\omega, \alpha, \beta)$ is non-degenerate, $x \neq \alpha$ and $y \neq \beta$.

\end{enumerate}

The pair $(Z,M)$ is called a {\it  M\"obius space}.

\end{mydef}

Given $M$, we obtain a map $cr : \mathcal{A} \rightarrow [0, \infty]$ and a map $crt : \mathcal{A} \rightarrow \overline{\Delta}$ using the formulas from section \ref{sec:Preliminaries}. Abusing notation, we will also refer to $(Z, cr)$ and $(Z, crt)$ as  M\"obius spaces.

It is a straightforward computation to show that for any semi-metric $\rho$, the induced cross difference $M$ is a  M\"obius structure. Buyalo proved in \cite{Buyalo} that the converse is true as well: Every  M\"obius structure is the cross difference of a semi-metric. We also have a characterisation of M\"obius structures that are induced by quasi-metrics.

\begin{mydef} \label{def:MobiusStructure}

Let $Z$ be a set with at least three points. A map $M : \mathcal{A} \rightarrow \overline{L_4}$ is called a {\it strong M\"obius structure} if it is a  M\"obius structure and the induced map $crt$ satisfies the following condition:

\begin{enumerate}

\item [(corner)] There exist open neighbourhoods of $(1:0:0), (0:1:0), (0:0:1)$, such that the image of $crt$ doesn't intersect these neighbourhoods.

\end{enumerate}

\end{mydef}

The remainder of this section is devoted to proving the following result.

\begin{prop} \label{prop:MobiusStructuresQuasiMetrics}
Let $(Z, M)$ be a  M\"obius structure. There exists an extended quasi-metric $\rho$ inducing $M$ if and only if $M$ is a strong M\"obius structure.

Furthermore, whenever there exists an extended $K$-quasi-metric inducing $M$, there exists a bounded $K^2$-quasi-metric inducing $M$.
\end{prop}

\begin{figure}
\begin{tikzpicture}
\draw [dashed] (6,0) -- (9,0);
\draw [dashed] (6,0) -- (7.5, 2.6);
\draw [dashed] (9,0) -- (7.5, 2.6);
\draw [fill] (6,0) circle [radius=0.05];
\draw [fill] (9,0) circle [radius=0.05];
\draw [fill] (7.5,2.6) circle [radius=0.05];
\draw [fill] (6.75,1.3) circle [radius=0.03];
\draw [fill] (8.25,1.3) circle [radius=0.03];
\draw [fill] (7.5,0) circle [radius=0.03];
\draw (6,0) circle [radius=0.5];
\draw (9,0) circle [radius=0.5];
\draw (7.5,2.6) circle [radius=0.5];
\node [below left] at (5.7,-0.3) {$(1:0:0)$};
\node [below right] at (9.3,-0.3) {$(0:1:0)$};
\node [above] at (7.5,3) {$(0:0:1)$};
\node at (7.5,1.2) {$\overline{\Delta}$};
\end{tikzpicture}
\caption{A  M\"obius structure $crt$ satisfies the (corner)-condition if and only if we can find open neighbourhoods as depicted above, such that the image of $crt$ in $\overline{\Delta}$ doesn't intersect these neighbourhoods.}
\end{figure}
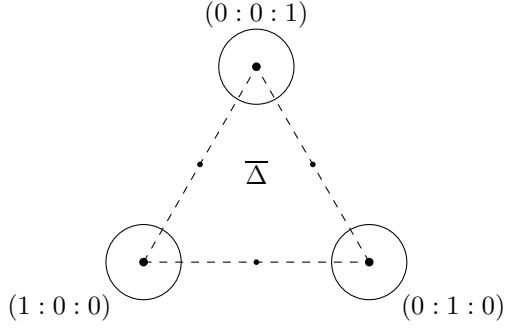

We begin by proving that quasi-metrics induce strong M\"obius structures.

\begin{lem} \label{lem:cornercondition}
Let $Z$ be a set, $\rho$ a quasi-metric on $Z$ and $crt$ the cross ratio induced by $\rho$. Then $crt$ satisfies the (corner)-condition and, therefore, the induced cross difference $M$ is a strong M\"obius structure
\end{lem}

\begin{proof}
Let $\rho$ be a $K$-quasi-metric on $Z$, $M$ the induced  M\"obius structure and $crt$ the induced cross ratio triple. Let $(w, x, y, z)$ be an admissible quadruple. We want to show that $crt(w, x, y, z)$ cannot be close to any of the three corner points. We will show this for the corner point $(0:0:1)$. The others work analogously.

In order for the point $crt(w, x, y, z)$ to be close to $(0:0:1)$, the ratio between the first and third component has to be small, as does the ratio between the second and the third component. We will show that this cannot happen. To prove this, we need to make several case distinctions. We leave it to the reader to check that all cases can be handled analogously by simply permuting the roles and properties of $w,x,y,z$.

Let $\epsilon > 0$. Consider $crt(w, x, y, z) = ( \rho(w,x)\rho(y,z) : \rho(w,y)\rho(x,z) : \rho(w,z)\rho(x,y))$ and suppose
\begin{equation*}
\begin{split}
\max( \rho(w,x)\rho(y,z), \rho(w,y)\rho(x,z) ) = \epsilon.
\end{split}
\end{equation*}

We want to bound $\rho(w,z)\rho(x,y)$ in terms of $\epsilon$, proving that the ratios $\frac{\rho(w,x)\rho(y,z)}{\rho(w,z)\rho(x,y)}, \frac{\rho(w,y)\rho(x,z)}{\rho(w,z)\rho(x,y)}$ cannot become too small. Assume without loss of generality that
\begin{equation*}
\begin{split}
&\rho(w,x) \leq \rho(y,z),\\
&\rho(w,y) \leq \rho(x,z)
\end{split}
\end{equation*}
and thus
\begin{equation*}
\begin{split}
& \rho(w,x) \leq \sqrt{\epsilon},\\
& \rho(w,y) \leq \sqrt{\epsilon}.
\end{split}
\end{equation*}

Since $\rho$ is a $K$-quasi-metric, we have
\[ \rho(x,y) \leq K \max(\rho(w,x), \rho(w,y)). \]
Without loss of generality, $\rho(w,x) \geq \rho(w,y)$ and hence
\[ \rho(x,y) \leq K \rho(w,x). \]
Further, we have
\[ \rho(z,w) \leq K \max(\rho(z,y), \rho(y,w)). \]
Combining these two inequalities, we conclude
\begin{equation*}
\begin{split}
\rho(x,y)\rho(z,w) & \leq K \rho(w,x)\rho(z,w)\\
& \leq K \frac{\epsilon}{\rho(y,z)}\rho(z,w)\\
& \leq \begin{cases} K^2 \epsilon & \text{if $\rho(z,y) \geq \rho(y,w)$}\\
K^2 \rho(w,x)\rho(y,w) \leq K^2 \epsilon & \text{if $\rho(z,y) < \rho(y,w)$}.
\end{cases}
\end{split}
\end{equation*}

We use this to show that $crt$ stays away from the corner points. Consider the triple
\[ (a,b,c) := ( \rho(w,x)\rho(y,z), \rho(w,y)\rho(x,z), \rho(w,z)\rho(x,y) ) \in \mathbb{R}^3.\]
The argument above shows that
\[ c \leq K^2\max(a,b). \]
Projecting $(a,b,c)$ to projective space, this implies that $(a:b:c) \notin \{ (a' : b' : 1) \in \mathbb{R}P^2 \vert a' < \frac{1}{K^2}, b' < \frac{1}{K^2} \}$, which is an open neighbourhood of $(0:0:1)$ in $\mathbb{R}P^2$. Since $(a:b:c) = crt(w, x, y, z)$, we found an open neighbourhood of $(0 : 0 : 1)$ that doesn't intersect with $\Ima(crt)$. Using analogous arguments, we find neighbourhoods of $(1 : 0 : 0)$ and $(0 : 1 : 0)$ that don't intersect with $\Ima(crt)$. This completes the proof of the Lemma.
\end{proof}

The other direction of the characterisation is based on the following result.

\begin{lem} \label{lem:cornerconditionatinfinity}
Let $\rho$ be a semi-metric on the set $Z$ such that $\rho$ has a point at infinity. Then $\rho$ is a quasi-metric if and only if its induced  M\"obius structure is a strong M\"obius structure.
\end{lem}

\begin{proof}
Lemma \ref{lem:cornercondition} immediately implies one direction of the proof. Suppose now $crt$ satisfies the (corner)-condition. We want to show that $\rho$ is a quasi-metric.

Denote the point at infinity with respect to $\rho$ by $\omega$. Let $x,y,z \in Z$. If two of the points are the same, or if one of the three points equals $\omega$, then the inequality for quasi-metrics is immediately satisfied. So assume $x,y,z$ are mutually different and different from $\omega$. Then $(x,y,z,\omega)$ is a non-degenerate quadruple and we can look at the cross-ratio-triple

\[ crt(x, y, z, \omega) = (\rho(x,y) : \rho(x,z) : \rho(y,z)). \]

Since $crt$ satisfies the (corner)-condition, we know that there is an open neighbourhood of $(1:0:0)$, independent of $x, y, z$, such that $crt(x, y, z, \omega)$ doesn't lie within that neighbourhood. We find $\epsilon > 0$, such that $crt(x, y, z, \omega) \notin N_{\epsilon}$, where
\[ N_{\epsilon} := \{Ê(1:b:c) \vert b,c \in (-\epsilon, \epsilon) \}, \; \epsilon > 0. \]
This implies that
\[ \max \left( \frac{\rho(x,z)}{\rho(x,y)}, \frac{\rho(y,z)}{\rho(x,y)} \right) \geq \epsilon \]
or equivalently
\[ \frac{1}{\epsilon}\max(\rho(x,z), \rho(z,y)) \geq \rho(x,y). \]
Thus, $\rho$ is a $\frac{1}{\epsilon}$-quasi-metric.

\end{proof}

Lemma \ref{lem:cornercondition} and Lemma \ref{lem:cornerconditionatinfinity} together with Buyalo's result that every M\"obius structure is induced by a semi-metric prove the first part of Proposition \ref{prop:MobiusStructuresQuasiMetrics}. We are left to prove the second part.

\begin{proof}[Proof of Proposition \ref{prop:MobiusStructuresQuasiMetrics}]

Let $\rho$ be a $K$-quasi-metric on $Z$ with a point at infinity. Denote the point at infinity by $\omega$. Choose a base point $o \in Z$. Now define for all $x, y, z \in Z$
\begin{equation*}
\begin{split}
& \lambda(z) := \max(1, \rho(z,o)),\\
& \tilde{\rho}(x, y) := \frac{ \rho(x, y) }{\lambda(x) \lambda(y)}.
\end{split}
\end{equation*}

By Proposition 5.3.6 from \cite{BS}, $\tilde{\rho}$ is a $K'^2$-quasi-metric for some $K' \geq K$. Furthermore,

\begin{equation*}
\begin{split}
\tilde{\rho}(x, y) & = \frac{\rho(x, y)}{ \lambda(x) \lambda(y) }\\
& \leq K \frac{\max(\rho(x, o),\rho(o, y))}{\lambda(x) \lambda(y)}\\
& \leq K
\end{split}
\end{equation*}

and thus, $\tilde{\rho}$ is a bounded quasi-metric on $Z$. A straightforward computation shows that $\rho$ and $\tilde{\rho}$ induce the same cross ratio and therefore, the same $M$.
\end{proof}


\section{The M\"obius topology} \label{sec:MobiusTopology}

Let $(Z, M)$ be a  M\"obius space. In order to construct a topology on $Z$, we will recall Buyalo's construction of a family of extended semi-metrics, each of which induces $M$. We will then use those semi-metrics to define a topology.

Since $M(w, x, y, z) \in \overline{L_4}$ is a triple, we write $M = (a, b, c)$ where $a, b, c: \mathcal{A} \rightarrow [-\infty, \infty]$ are the components of $M$. Condition 4) in the definition of  M\"obius structures now implies that for all non-degenerate triples $(\omega, \alpha, \beta)$ and $x, y \in Z \diagdown \{ \omega \}$, we have
\[ a(\alpha, x, \omega, \beta) + a(\alpha, \omega, y, \beta) - a( \alpha, x, y, \beta) = b(\alpha, x, y, \beta) - b(\alpha, x, \omega, \beta) - b(\alpha, \omega, y, \beta). \]
Therefore, denoting $A := ( \omega, \alpha, \beta)$, we can define
\[ \rho_A(x,y) := \begin{cases} 
0 & \text{if $x=y$},\\
e^{a(\alpha, x, \omega, \beta) + a(\alpha, \omega, y, \beta) - a(\alpha, x, y, \beta)} & \text{if $x \neq \beta$ and $y \neq \alpha$},\\
e^{b(\alpha, x, y, \beta) - b(\alpha, x, \omega, \beta) - b(\alpha, \omega, y, \beta)} & \text{if $x \neq \alpha$ and $y \neq \beta$}.
\end{cases}\]

In \cite{Buyalo}, Buyalo proved the following properties of $\rho_A$.

\begin{thm}[\cite{Buyalo}] \label{thm:BuyaloSemiMetrics}
Let $(Z,M)$ be a  M\"obius space, $\rho_A$ the map induced by $A$ for any non-degenerate triple $A$ in $Z$. Let $M_A$ be the cross difference induced by $\rho_A$. Then the following hold:

\begin{enumerate}

\item[1)] Every $\rho_A$ is an extended semi-metric on $Z$ i.\,e. $\rho_A$ is symmetric, non-negative and non-degenerate.

\item[2)] For all $x \neq \omega$, $\rho_{(\omega, \alpha, \beta)}(x, \omega) = \infty$. Moreover, $\rho_{(\omega, \alpha, \beta)}(\alpha, \beta) = 1$.

\item[3)] Let $A = (\omega, \alpha, \beta)$, $A' = (\omega, \beta, \alpha)$, $A'' = (\beta, \alpha, \omega)$. Then $\rho_A = \rho_{A'}$ and $\rho_{A''}(x,y) = \frac{\rho_A(x,y)}{\rho_A(x, \beta) \rho_A(\beta, y)}$.

\item[4)] Let $(\omega, \alpha, \beta, b)$ be a non-degenerate quadruple in $Z$. Then $\rho_{(\omega, \alpha, \beta)} = \lambda \rho_{(\omega, \alpha, b)}$ for some constant $\lambda > 0$.

\item[5)] For each non-degenerate triple $A$, $M_A = M$.

\end{enumerate}
\end{thm}

The following result is a straight-forward computation.

\begin{lem} \label{lem:InvolutionFormularho_A}
If $M$ is induced by a semi-metric $\rho$, then for every non-degenerate triple $A$ and for all $x \neq y$: $\rho_A(x,y) = \frac{\rho(x,y)}{\rho(x, \omega)\rho(\omega, y)}\frac{\rho(\alpha, \omega) \rho(\omega, \beta)}{\rho(\alpha, \beta)}.$
\end{lem}

\begin{proof}
Let $A$ be a non-degenerate triple and let $x, y \in Z$. Suppose, $x \neq \beta$ and $y \neq \alpha$.
\begin{equation*}
\begin{split}
\rho_A(x,y) & = e^{a(\alpha, x, \omega, \beta) + a(\alpha, \omega, y, \beta) - a(\alpha, x, y, \beta)}\\
& = cr(\alpha, \omega, \beta, x) \cdot cr(\alpha, y, \beta, \omega) \cdot cr(\alpha, y, \beta, x)^{-1}\\
& = \frac{ \rho(\alpha, \omega) \rho( \beta, x) \rho( \alpha, y) \rho(\beta, \omega) \rho(\alpha, \beta) \rho(x,y) }{ \rho(\alpha, \beta) \rho(\omega, x) \rho(\alpha, \beta) \rho(y, \omega) \rho(\alpha, y) \rho(\beta, x) }\\
& = \frac{\rho(x,y)}{\rho(x,\omega)\rho(\omega,y)} \frac{\rho(\alpha, \omega) \rho(\omega, \beta)}{\rho(\alpha, \beta)}.
\end{split}
\end{equation*}
The case when $x \neq \alpha$ and $y \neq \beta$ is analogous.
\end{proof}

We see that $\{Ê\rho_A \}_A$ is a family of extended semi-metrics that can be constructed from a  M\"obius structure $M$. In \cite{Buyalo}, these semi-metrics are used to define the following topology.

Let $A = (\omega, \alpha, \beta)$ be a non-degenerate triple. For $y \in Z \diagdown \{ \omega \}$ and $r > 0$, define

\[ B_{A,r}(y) := \{ x \in Z \vert \rho_A(x,y) < r \}Ê\]

to be the {\it open ball} around $y$ of radius $r$ with respect to $\rho_A$. We take the family of all open balls for all non-degenerate triples $A$, all positive radii $r$ and all points $y \in Z \diagdown \{ \omega \}$ as a subbasis to define a topology $\mathcal{T}_M$ on $Z$. This is the {\it topology on $Z$ induced by $M$}. From now on, whenever we speak of a  M\"obius space $(Z,M)$ we assume it to be endowed with the topology induced by $M$, unless stated otherwise.\\

\begin{lem}
Consider $[0, \infty]$ with the topology where open neighbourhoods of $\infty$ are complements of compact sets in $[0, \infty)$ and open neighbourhoods of other points are just the standard euclidean open neighbourhoods. Let $(Z,M)$ be a  M\"obius space, $A$ a non-degenerate triple in $Z$ and $y \in Z$. Then the maps $\rho_A( \cdot, y), \rho_A(y, \cdot) : Z \rightarrow [0, \infty]$ are continuous with respect to $\mathcal{T}_M$.
\end{lem}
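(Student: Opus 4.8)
The plan is to reduce continuity to a subbasis computation on both sides. On the source, $\mathcal{T}_M$ is generated by the balls $B_{A',r}(y')$; on the target, the given topology on $[0,\infty]$ admits the subbasis consisting of the sets $[0,b)$ with $b\in(0,\infty)$ together with the sets $(a,\infty]$ with $a\in[0,\infty)$ (finite intersections recover all bounded open intervals, and $(a,\infty]=[0,\infty]\setminus[0,a]$ is exactly a basic complement of a compact set at $\infty$). Since preimages commute with unions and finite intersections, it suffices to show that the preimages of subbasic sets are open. Moreover, $d_A$ is symmetric by Theorem \ref{thm:BuyaloSemiMetrics}(1), so $d_A(\cdot,y)=d_A(y,\cdot)$ and I only need continuity of $f:=d_A(\cdot,y)$. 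Here $A=(\omega,\alpha,\beta)$, and I take $y\neq\omega$; the configuration $y=\omega$ is degenerate since then $f\equiv\infty$ on $X\setminus\{\omega\}$ by Theorem \ref{thm:BuyaloSemiMetrics}(2), and I treat it separately at the end.

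The sublevel sets are immediate: $f^{-1}([0,b))=\{x\mid d_A(x,y)<b\}=B_{A,b}(y)$, which is a subbasic element of $\mathcal{T}_M$ precisely because $y\neq\omega$. The real content is the superlevel sets $f^{-1}((a,\infty])=\{x\mid d_A(x,y)>a\}$, and the idea is to convert ``large $d_A$-distance from $y$'' into ``small distance from $\omega$'' via an involution in the sense of Definition \ref{def:Involution}. Forming the involution $(d_A)_y$ of $d_A$ at $y$ and using $d_A(x,\omega)=d_A(y,\omega)=\infty$ (Theorem \ref{thm:BuyaloSemiMetrics}(2)) together with the convention that infinite distances cancel, I compute
\[ (d_A)_y(x,\omega)=\frac{d_A(x,\omega)}{d_A(x,y)\,d_A(y,\omega)}=\frac{1}{d_A(x,y)}, \]
an identity that one checks remains valid at the exceptional arguments $x=\omega$ and $x=y$ as well. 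Thus $y$ is the point at infinity of $(d_A)_y$ while $\omega$ becomes a regular point, so this semi-metric is of the right shape to define balls centred at $\omega$.

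It remains to identify $(d_A)_y$ with a member of the family $\{d_{A'}\}$. After possibly swapping $\alpha$ and $\beta$ (which leaves $d_A$ unchanged by Theorem \ref{thm:BuyaloSemiMetrics}(3)) and rescaling by Theorem \ref{thm:BuyaloSemiMetrics}(4), I may write $d_A=\lambda\, d_{(\omega,\gamma,y)}$ for some $\lambda>0$ and some $\gamma\in\{\alpha,\beta\}$ with $\gamma\neq y$. Applying the second identity of Theorem \ref{thm:BuyaloSemiMetrics}(3) to the triple $(\omega,\gamma,y)$ identifies the involution of $d_{(\omega,\gamma,y)}$ at $y$ with the family member $d_{(y,\gamma,\omega)}$, whence $(d_A)_y=\lambda^{-1}d_{(y,\gamma,\omega)}$. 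Combining this with the boxed computation gives $d_{(y,\gamma,\omega)}(x,\omega)=\lambda/d_A(x,y)$, so that for $a>0$
\[ f^{-1}((a,\infty])=\Bigl\{x\mid d_{(y,\gamma,\omega)}(x,\omega)<\tfrac{\lambda}{a}\Bigr\}=B_{(y,\gamma,\omega),\,\lambda/a}(\omega), \]
a subbasic open set (legitimate since $\omega\neq y$), while for $a=0$ the set $f^{-1}((0,\infty])=X\setminus\{y\}=\bigcup_{r>0}B_{(y,\gamma,\omega),\,r}(\omega)$ is open as a union of balls. For the leftover case $y=\omega$, the same union argument shows $X\setminus\{\omega\}=f^{-1}((a,\infty])$ is open.

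I expect the \textbf{main obstacle} to be exactly this identification step: matching the abstractly defined involution $(d_A)_y$ with a genuine family semi-metric $d_{(y,\gamma,\omega)}$ up to the correct positive constant, and carrying out the small case distinction that guarantees $y$ can be placed as the third coordinate of an admissible triple (so that $\gamma\neq y$). The bookkeeping with the ``infinite distances cancel'' convention at the degenerate arguments, and the verification that $\omega$ really becomes an admissible ball centre after the involution, are the places where care is needed; once they are settled, both families of preimages are open and continuity of $f$ follows.
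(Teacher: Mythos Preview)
Your proof is correct and follows essentially the same strategy as the paper: reduce continuity to showing that the superlevel sets $\{x:d_A(x,y)>a\}$ are open, and do this by combining the involution identity of Theorem~\ref{thm:BuyaloSemiMetrics}(3) with the rescaling of Theorem~\ref{thm:BuyaloSemiMetrics}(4) to rewrite each such set as an open ball $B_{A',r'}(\omega)$ for a suitable triple $A'$ with $y$ at infinity. The only cosmetic difference is that the paper introduces an auxiliary fifth point $o$ to form the new triple, whereas you avoid this by swapping $\alpha,\beta$ and taking $\gamma\in\{\alpha,\beta\}$ with $\gamma\neq y$; both routes land on the same identity $d_{A'}(x,\omega)=\lambda/d_A(x,y)$.
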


\begin{proof}
First note that if $y = \omega$, $\rho_A( \cdot, y) \equiv \infty$ is constant and hence continuous. If $y \neq \omega$, we start by defining the set

\[ C_{A,r}(y) := \{ x \in Z \vert \rho_A(x,y) > r \} \]

which can be thought of as the complement of a `closed' ball (again $y \neq \omega$). Let $(a,b)$ be an open interval in $\mathbb{R}$ (possibly unbounded) and consider the map $f:= \rho_A(\cdot, y)$ for some fixed $y \neq \omega$. Then $f^{-1}((a,b)) = B_{A,b}(y) \cap C_{A,a}(y)$ and continuity of $f$ follows, if $C_{A,a}(y)$ is open for all $a \geq 0$.

By Theorem \ref{thm:BuyaloSemiMetrics}, we know that for any non-degenerate triple $(\omega, \alpha, y)$ and every $x \in Z \diagdown \{ y, \omega \}$
\begin{equation*}
\begin{split}
& \rho_{(\omega, \alpha, y)} (x, y) \rho_{(y, \alpha, \omega)}(x, \omega) = 1,\\
& \rho_{(\omega, \alpha, \beta)}(x,y) = \lambda \rho_{(\omega, \alpha, o)}(x,y).
\end{split}
\end{equation*}
Therefore, we see that
\[ \rho_{(y, \alpha, o)}(x, \omega) = \lambda \rho_{(y, \alpha, \omega)}(x,\omega) = \frac{\lambda}{\rho_{(\omega, \alpha, y)}(x, y)} = \frac{\lambda}{\mu \rho_{(\omega, \alpha, \beta)}(x,y)} \]

for $y,$ $\omega,$ $\alpha,$ $\beta,$ $o$ mutually different and $\lambda,$ $\mu > 0$ depending only on $\alpha,$ $\omega,$ $y,$ $o$ and $\alpha,$ $\beta,$ $\omega,$ $y$ respectively. This immediately implies that $B_{(\omega, \alpha, \beta),r}(y) = C_{(y, \alpha, o), \frac{\lambda}{\mu r}}(\omega)$ for some $\lambda,$ $\mu > 0$ (notice that the points $\omega$ and $y$ behave nicely). Since this is true for all $\omega,$ $\alpha,$ $\beta,$ $y,$ $o$ and $r$ as above, we see that $C_{A,r}(y)$ is open for all non-degenerate triples $A$, all $r > 0$ and all $y \in Z$. This implies the Lemma.
\end{proof}

\begin{rem}
Notice that the proof of the continuity of $\rho_A$ relies on the fact that we take the open balls of {\it all} semi-metrics $\rho_A$. It is not sufficient to take just one - or some - of the non-degenerate triples. Only collectively can they define a topology such that $\rho_A(\cdot, y)$ is continuous. In particular, the involution plays a critical role. The following example illustrates how a the topology induced by a single quasi-metric does not have this continuity-property.
\end{rem}

\begin{exam} \label{exam:Continuity}
Let $X = [0, 1]$ and define
\[ \rho(x,y) := \begin{cases} 
\vert x - y \vert & \text{if $\vert x - y \vert < 1$},\\
2 \vert x - y \vert & \text{if $\vert x - y \vert \geq 1$}.
\end{cases}\]
Since for all $x, y, z \in X$, we have
\[ \rho(x, y) \leq 2 \vert x - y \vert \leq 2 ( \vert x - z \vert + \vert z - y \vert ) \leq 4 \max( \rho(x,z), \rho(z,y) ), \]
we see that $\rho$ is a $4$-quasi-metric. Consider the topology generated by the `open balls' $B_r(x) := \{ y \in Z \vert \rho(x,y) < r \}$ and the sequence $x_n := 1 - \frac{1}{n}$. The sequence $x_n$ converges to $1$ in the topology induced by $\rho$. However,
\[ \rho(0, x_n) = 1 - \frac{1}{n} \xrightarrow{n \rightarrow \infty} 1 \neq 2 = \rho(0,1) \]
and therefore, $\rho$ is not continuous with respect to the topology it induces. This is in significant contrast to metric spaces or the maps $\rho_A$ with the M\"obius topology.
\end{exam}

\begin{lem}
The topological space $(Z,\mathcal{T}_M)$ is Hausdorff.
\end{lem}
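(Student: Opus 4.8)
The plan is to avoid constructing disjoint balls directly and instead to separate two given points by pulling back a separation through a single continuous map. The reason is that the semi-metrics $d_A$ from Theorem~\ref{thm:BuyaloSemiMetrics} are only symmetric and non-degenerate, with \emph{no} triangle inequality available, so the classical metric argument of taking $d_A$-balls of radius $\tfrac{1}{2}d_A(x,y)$ around $x$ and $y$ is not legitimate: two such balls could well overlap. What \emph{is} available is the preceding lemma, which tells us that $d_A(\cdot,y)\colon X \to [0,\infty]$ is continuous for $\mathcal{T}_M$; this continuity, together with the fact that the target $[0,\infty]$ is itself Hausdorff, will carry the whole argument.

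Concretely, given distinct $x,y \in X$, I would first choose the triple with care. Since $X$ has at least four points, I can pick $\omega \in X \setminus \{x,y\}$ and extend it to a non-degenerate triple $A = (\omega,\alpha,\beta)$, for instance $A=(\omega,x,y)$. I then set $f := d_A(\cdot,y)$, continuous by the preceding lemma. Evaluating, non-degeneracy (part~1 of Theorem~\ref{thm:BuyaloSemiMetrics}) gives $f(y)=d_A(y,y)=0$, while $f(x)=d_A(x,y)>0$ because $x\neq y$, so that $f(x)\neq f(y)$. Finally, since $[0,\infty]$ with the stated topology is the one-point compactification of the locally compact Hausdorff space $[0,\infty)$ and is hence Hausdorff, I can choose disjoint open sets separating the distinct values $f(x)$ and $f(y)$ --- concretely $(c/2,\infty]$ and $[0,c/2)$ with $c:=d_A(x,y)$ --- and take their $f$-preimages. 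These preimages are open in $\mathcal{T}_M$, disjoint, and contain $x$ and $y$ respectively, which establishes the Hausdorff property.

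The step I would flag as the main obstacle is conceptual rather than computational: recognising that the usual metric-space proof is simply unavailable, so that the entire burden of separation has to be shifted onto the continuity statement of the preceding lemma. Once that shift is made the argument is essentially formal, and the only remaining bookkeeping concerns the point at infinity: part~2 of Theorem~\ref{thm:BuyaloSemiMetrics} identifies $\omega$ as the unique point at infinite $d_A$-distance, so choosing $\omega \notin \{x,y\}$ (possible because $|X|\geq 4$) keeps $c=d_A(x,y)$ both finite and positive and makes the separation of $f(x)$ from $f(y)$ in $[0,\infty]$ completely transparent. It is worth noting that the genuine content of the Hausdorff statement is really carried by that earlier continuity lemma, whose proof in turn exploited the balls of \emph{all} the semi-metrics $d_A$ simultaneously.
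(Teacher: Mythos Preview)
Your proof is correct, but it differs from the paper's. The paper does not invoke the continuity lemma at all; instead it separates $x$ and $y$ by two explicit balls taken with respect to \emph{two different} semi-metrics. With $\alpha \in X\setminus\{x,y\}$, the reciprocal identity
\[
d_{(y,\alpha,x)}(x,z)\cdot d_{(x,\alpha,y)}(y,z)=1
\]
(a special case of equation~(3), equivalently Theorem~\ref{thm:BuyaloSemiMetrics}~3)) shows that the unit balls $B_{(y,\alpha,x),1}(x)$ and $B_{(x,\alpha,y),1}(y)$ are disjoint: if $z$ lies in the first then $d_{(x,\alpha,y)}(y,z)>1$, so $z$ misses the second.

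Your route factors everything through the continuity of $d_A(\cdot,y)$ and pulls the separation back from the Hausdorff target $[0,\infty]$. This is a softer argument, and it has the virtue of isolating where the real work happens---namely in the continuity lemma, whose proof already exploits the same reciprocal identity. The paper's route is shorter and more self-contained, producing the separating neighbourhoods directly as subbasis elements; it also illustrates concretely the Remark that it is the collection of \emph{all} $d_A$ acting together (here two of them, with $x$ and $y$ swapped into the ``infinity'' slot) that forces Hausdorffness. One minor point of bookkeeping: you only need $\lvert X\rvert\geq 3$, not $\lvert X\rvert\geq 4$, since your own choice $A=(\omega,x,y)$ is already a non-degenerate triple.
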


\begin{proof}
Let $x,y \in Z$ be two different points. Choose a point $\alpha \in Z \diagdown \{x,y\}$. We know that for every $z \in Z$,

\[ \rho_{(y,\alpha, x)}(x,z) = \frac{1}{\rho_{(x,\alpha, y)}(y,z)}. \]

Therefore, the intersection of the two open balls $B_{(y, \alpha, x),1}(x), B_{(x,\alpha,y),1}(y)$ is empty. This completes the proof.

\end{proof}

Consider two  M\"obius spaces $(Z,M), (Z',M')$. We want to have a notion of maps that are compatible with the M\"obius structures.

\begin{mydef} \label{def:MobiusEquivalence}
Let $(Z,M), (Z',M')$ be  M\"obius spaces. A map $f : Z \rightarrow Z'$ is called a {\it M\"obius map} if and only if for every admissible quadruple $(w,x,y,z) \in \mathcal{A}$, we have
\[ M(w, x, y, z) = M(f(w), f(x), f(y), f(z)). \]
If a M\"obius map $f$ is bijective, we call it a {\it M\"obius equivalence}.
\end{mydef}

\begin{lem} \label{lem:homeomorphism}
Let $(Z,M), (Z',M')$ be two  M\"obius spaces and $f : Z \rightarrow Z'$ a M\"obius equivalence. Then $f$ is a homeomorphism when we equip $Z$ and $Z'$ with their respective M\"obius topology.
\end{lem}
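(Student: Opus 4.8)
The plan is to show that a Möbius equivalence $f$ carries the entire family of semi-metrics $d_A$ on $X$ to the corresponding family on $X'$, and hence carries the subbasis of open balls defining $\mathcal{T}_M$ to the subbasis defining $\mathcal{T}_{M'}$. Since the argument is symmetric in $f$ and $f^{-1}$, this will immediately give that $f$ is a homeomorphism. The key observation is that $d_A$ was defined purely in terms of the values of $M$ on admissible quadruples built from $A = (\omega,\alpha,\beta)$ and the two points $x,y$; it never refers to the points of $X$ themselves except through $M$.

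First I would fix a non-degenerate triple $A = (\omega,\alpha,\beta)$ in $X$ and set $A' := (f(\omega),f(\alpha),f(\beta))$, which is a non-degenerate triple in $X'$ since $f$ is a bijection. Writing $M = (a,b,c)$ and $M' = (a',b',c')$, the defining equation $M(wxyz) = M'(f(w)f(x)f(y)f(z))$ gives in particular $a(\alpha x \omega \beta) = a'(f(\alpha)f(x)f(\omega)f(\beta))$ and likewise for the $b$-component and for every quadruple appearing in the definition of $d_A$. Reading off the definition of $d_A(x,y)$ case by case (the $x=y$ case, the $x\neq\beta,\,y\neq\alpha$ case, and the $x\neq\alpha,\,y\neq\beta$ case), each exponent on the $X$-side equals the corresponding exponent on the $X'$-side, so I would conclude
\[ d_A(x,y) = d'_{A'}(f(x),f(y)) \qquad \text{for all } x,y \in X, \]
where $d'_{A'}$ denotes the semi-metric induced by $M'$ from the triple $A'$.

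With this identity in hand, the translation of open balls is immediate: for $y \in X \smallsetminus\{\omega\}$ and $r>0$,
\[ f\bigl(B_{A,r}(y)\bigr) = \{\, f(x) \mid d_A(x,y) < r \,\} = \{\, x' \in X' \mid d'_{A'}(x',f(y)) < r \,\} = B'_{A',r}(f(y)), \]
using surjectivity of $f$ for the middle equality. Thus $f$ maps every subbasic open set of $\mathcal{T}_M$ bijectively onto a subbasic open set of $\mathcal{T}_{M'}$, so $f$ is an open map; applying the same reasoning to the Möbius equivalence $f^{-1}$ (whose defining relation is obtained by reindexing the quadruple) shows $f$ is continuous. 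Hence $f$ is a homeomorphism.

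The only genuine obstacle is bookkeeping rather than mathematics: one must verify that the two case-distinctions in the definition of $d_A$ are respected by $f$, i.e. that $x \neq \beta$ holds in $X$ if and only if $f(x) \neq f(\beta)$ in $X'$, and similarly for the other exclusions. This follows purely from injectivity of $f$, so the expression chosen to evaluate $d_A(x,y)$ on the $X$-side is exactly the one chosen to evaluate $d'_{A'}(f(x),f(y))$ on the $X'$-side, and the two well-definedness clauses of Property 4) guarantee these expressions agree whenever both cases apply. Once this matching of cases is checked, the remaining steps are the routine substitutions sketched above.
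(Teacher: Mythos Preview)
Your proof is correct and follows exactly the same route as the paper: show that $d_A(x,y) = d_{f(A)}(f(x),f(y))$ directly from the definition of $d_A$ in terms of $M$, deduce that $f$ carries the subbasic balls $B_{A,r}(y)$ bijectively onto the subbasic balls $B_{f(A),r}(f(y))$, and conclude by symmetry with $f^{-1}$. The only difference is that you spell out the case-matching bookkeeping that the paper leaves implicit.
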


\begin{proof}
Let $A$ be a non-degenerate triple in $Z$. Since $f$ is a bijection, it sends $A$ to a non-degenerate triple denoted $f(A)$ in $Z'$. Looking at the definition of the semi-metric $\rho_A$, we immediately see that, since $f$ preserves the  M\"obius structure, we have for all $x,y \in Z$

\[ \rho_A(x,y) = \rho_{f(A)}(f(x),f(y)). \]

Thus, the map $f$ sends an open ball $B_{A,r}(x)$ in $Z$ to the open ball $B_{f(A),r}(f(x))$ in $Z'$ and a subbasis of $\mathcal{T}_M$ to a subbasis of $\mathcal{T}_{M'}$. The same is true for $f^{-1}$ which proves the Lemma.

\end{proof}

Classically, M\"obius structures arise in the study of metric spaces. When a M\"obius structure arises from a metric, the topology constructed above coincides with the topology induced by the metric.

\begin{thm} \label{thm:metrictopology}
Let $(Z,\rho)$ be a metric space. Denote the topology on $Z$ induced by $\rho$ by $\mathcal{T}_\rho$ and the induced  M\"obius structure by $M$. Let $\mathcal{T}_M$ be the topology induced by $M$ and $\{ \rho_A \}_A$ the family of semi-metrics induced by $M$. Then $\mathcal{T}_{\rho} = \mathcal{T}_M$.
\end{thm}

\begin{proof}
Since $Z$ is a metric space, Lemma \ref{lem:InvolutionFormularho_A} tells us that for all non-degenerate triple $A$ and for all $x \neq y$, $\rho_A(x,y) = \frac{\rho(x,y)}{\rho(x, \omega)\rho(\omega,y)}\frac{\rho(\alpha, \omega)\rho(\omega, \beta)}{\rho(\alpha, \beta)}$. In particular, $\rho_A(x, y)$ is continuous in $x$ with respect to $\mathcal{T}_{\rho}$ as long as $x \in Z \diagdown \{ \omega \}$.

We need to show that the open balls in $\rho$ are open with respect to $\mathcal{T}_M$ and that the open balls with respect to the $\rho_A$ are open with respect to $\mathcal{T}_\rho$. We denote by

\[ B_{s}(y) := \{ x \in Z \vert \rho(x,y) < s \} \]

the open ball of radius $s$ with respect to $\rho$ and by

\[ B_{A,s}(y) := \{ x \in Z \vert \rho_A(x,y) < s \} \]

the open ball of radius $s$ with respect to $\rho_A$. These sets generate $\mathcal{T}_\rho$ respectively $\mathcal{T}_M$.\\

We first show that $B_{A,r}(y)$ is open with respect to $\mathcal{T}_\rho$ for all non-degenerate triples $A, r > 0$ and $y \in Z \diagdown \{ \omega \}$. Let $x \in B_{A,r}(y)$, i.e.\,$\rho_A(x,y) < r$. Since $\rho_A$ is continuous with respect to $\mathcal{T}_{\rho}$, there exists some $\epsilon > 0$, such that $B_{\epsilon}(x) \subset B_{A,r}(y)$. We conclude that $B_{A,r}(y)$ is open in $\mathcal{T}_{\rho}$ and that $\mathcal{T}_{\rho}$ is finer than $\mathcal{T}_M$.\\

In order to show that $\mathcal{T}_M$ is finer than $\mathcal{T}_\rho$, we consider the open ball $B_{r}(y)$ for $r > 0, y \in Z$. Let $x \in B_{r}(y)$. Since $\rho$ is a metric, there exists a smaller ball around $x$ contained in $B_{r}(y)$, i.e. there exists $r' < r$ such that $B_{r'}(x) \subset B_{r}(y)$. Replacing $r'$ by $\epsilon$, it is now enough to show that for every $\epsilon > 0$, we can find $\delta > 0$ and a non-degenerate triple $A$ such that $B_{A,\delta}(x) \subset B_{\epsilon}(x)$.

Choose $\omega, \alpha \in Z$ such that $A := (\omega, \alpha, x)$ is non-degenerate. We claim that there exist $\delta > 0$ and $C > 0$, such that for all $z \in B_{A,\delta}(x)$, $\rho(z, \omega) < C$. Suppose not. Then we find a sequence $z_n$ such that $\rho_A(z_n, x) \rightarrow 0$ and $\rho(z_n, \omega) \rightarrow \infty$. However,
\begin{equation*}
\begin{split}
0 \leftarrow \rho_A(z_n, x) & = \frac{ \rho(z_n, x) \rho(\alpha, \omega) }{\rho(z_n, \omega) \rho(\alpha, x)}\\
& \geq \frac{  \rho(z_n, \omega) - \rho(\omega, x)  }{\rho(z_n, \omega)} \frac{\rho(\alpha, \omega) }{\rho(\alpha,x)} \rightarrow \frac{\rho(\alpha, \omega)}{\rho(\alpha, x)} \neq 0.
\end{split}
\end{equation*}

Let $0 < \delta' < \min \left( \epsilon \frac{Ê\rho(\alpha, \omega) }{ C \rho(\alpha, x) }, \delta \right)$ and $z \in B_{A, \delta'}(x)$. Then
\begin{equation*}
\begin{split}
\rho(z,x) & = \frac{ \rho(z,x) \rho(\alpha, \omega) }{ \rho(z,\omega) \rho(\alpha,x) } \frac{ \rho(z,\omega) \rho(\alpha, x) }{ \rho(\alpha, \omega) }\\
& \leq \rho_A(z,x) \frac{ C \rho(\alpha, x) }{ \rho(\alpha, \omega) }\\
& \leq \epsilon 
\end{split}
\end{equation*}
In other words, $B_{A,\delta'}(x) \subseteq B_{\epsilon}(x)$. This implies that balls with respect to $\rho$ are open with respect to $\mathcal{T}_M$. Hence $\mathcal{T}_M$ is finer than $\mathcal{T}_\rho$, which completes the proof of Theorem \ref{thm:metrictopology}.

\end{proof}

\begin{rem}
This proof easily extends to extended metric spaces which have a point at infinity: Let $\infty$ denote the point at infinity in the metric space $(Z,\rho)$. Then, for any non-degenerate triple $A = (\infty, \alpha, \beta)$, we have $\rho_A = \lambda \rho$ for some positive number $\lambda$. This immediately implies that $\mathcal{T}_{\rho} \subseteq \mathcal{T}_M$. To prove equality, one modifies the proof provided above.
\end{rem}

Applying Lemma \ref{lem:homeomorphism} in the context of Theorem \ref{thm:metrictopology} immediately yields the following

\begin{cor}
Let $(Z,\rho), (Z',\rho')$ be -- possibly extended -- metric spaces, $M_{\rho}, M_{\rho'}$ the induced M\"obius structures and $f : (Z, M_\rho) \rightarrow Z'$ a M\"obius equivalence. Then $f$ is a homeomorphism with respect to the metric topologies $\mathcal{T}_{\rho}, \mathcal{T}_{\rho'}$.
\end{cor}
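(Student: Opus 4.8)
The plan is to deduce the corollary essentially for free by chaining together Lemma~\ref{lem:homeomorphism} and Theorem~\ref{thm:metrictopology}, which between them already contain all the content needed. First I would read the hypothesis correctly: the map $f : X \to X'$ is, by assumption, a M\"obius equivalence between the generalized M\"obius spaces $(X, M_d)$ and $(X', M_{d'})$, where the codomain is understood to carry the M\"obius structure $M_{d'}$ induced by $d'$. Applying Lemma~\ref{lem:homeomorphism} verbatim then yields that $f$ is a homeomorphism with respect to the M\"obius topologies $\mathcal{T}_{M_d}$ and $\mathcal{T}_{M_{d'}}$.

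The second step is to replace the M\"obius topologies by the metric topologies. By Theorem~\ref{thm:metrictopology} applied to $(X,d)$ we have $\mathcal{T}_d = \mathcal{T}_{M_d}$, and applied to $(X',d')$ we have $\mathcal{T}_{d'} = \mathcal{T}_{M_{d'}}$. Substituting these two identities into the conclusion of the first step shows directly that $f$ is a homeomorphism with respect to $\mathcal{T}_d$ and $\mathcal{T}_{d'}$, which is exactly the assertion. There is no separate estimate or construction to carry out; the work has all been front-loaded into the two results being cited.

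The only point requiring genuine care — and hence the main obstacle — is the phrase ``possibly extended'' in the hypothesis. Theorem~\ref{thm:metrictopology} is stated for ordinary metric spaces, so when $X$ or $X'$ carries a point at infinity I would instead invoke the Remark immediately following that theorem, which asserts that the equality $\mathcal{T}_d = \mathcal{T}_{M_d}$ persists for extended metric spaces; with that identification the substitution above goes through unchanged. I would also note that the definition of $M_d$ on admissible quadruples involving the point $\omega$ at infinity (via the ``infinite distances cancel'' convention introduced in Section~\ref{subsec:motivation}) is precisely the structure used everywhere, so that Definition~\ref{def:MobiusEquivalence} and Lemma~\ref{lem:homeomorphism} apply equally in the extended setting and no additional argument is needed there.
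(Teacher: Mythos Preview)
Your proposal is correct and follows precisely the paper's own argument: apply Lemma~\ref{lem:homeomorphism} to obtain a homeomorphism with respect to the M\"obius topologies, then use Theorem~\ref{thm:metrictopology} (together with the subsequent Remark for the extended case) to identify these with the metric topologies. There is nothing to add.
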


\begin{proof}
We know from Lemma \ref{lem:homeomorphism} that $f$ is a homeomorphism with respect to the topologies $\mathcal{T}_M, \mathcal{T}_{M'}$. By Theorem \ref{thm:metrictopology}, the M\"obius topologies and the metric topologies coincide, i.e. $\mathcal{T}_M = \mathcal{T}_\rho$ and $\mathcal{T}_{M'} = \mathcal{T}_{\rho'}$. The statement follows.
\end{proof}




\section{Cauchy-sequences and completeness} \label{sec:CauchySequences}

The next two sections are devoted to the notion of Cauchy-sequences. We show how to define Cauchy-sequences on strong M\"obius spaces in a way that is compatible with the situation when the strong M\"obius structure is induced by a metric space. In the next section, we show how to construct a completion under an additional symmetry assumption.

Let $(Z,\rho)$ be a metric space, $M$ its induced  strong M\"obius structure. We recall that a Cauchy sequence -- in its usual sense on a metric space -- is a sequence $(x_n)_n$ in $Z$ such that for all $\epsilon > 0$ there exists a natural number $N_{\epsilon}$ such that for all $m,n \geq N_{\epsilon}$, we have $\rho(x_m,x_n) < \epsilon$. Our goal is to generalise this notion to strong M\"obius spaces. It may be tempting to simply generalise the statement above to quasi- and semi-metrics and use that as a definition, but since a  M\"obius structure can be induced by many different semi-metrics, a definition relying only on the M\"obius structure itself is more desirable.\\

Before we formulate the key insight, we need some notation. Let $\rho$ be a, possibly extended, semi-metric that induces $M$. If $\rho$ has a point at infinity, we denote that point by $\omega$. We denote $(y \vert z) := -\ln(\rho(y,z))$ for all $y, z \in Z$. Further, consider a sequence $(x_{n,m})_{n,m \in \mathbb{N}}$ in $Z$. We say that $\lim \limits_{n,m \rightarrow \infty} x_{n,m} = y$, if and only if for all $\epsilon > 0$ there exists an $N_{\epsilon}$ such that for all $n,m \geq N_{\epsilon}$, we have $\rho(x_{n,m}, y) < \epsilon$.

In what follows, we will often consider a sequence $(x_n)_n$ and a pair of points $y, z \in Z \diagdown \{ \omega \}$ such that $y \neq z$ and $\rho(x_n, y)$ and $\rho(x_n, z)$ both don't converge to zero. Given a sequence $(x_n)_n$, we will refer such a pair $y,z$ as a {\it good pair}.

Recall that we write $M = (a,b,c)$, where $a$, $b$, $c$ denote the components of $M$. We can now characterize Cauchy sequences in terms of the  M\"obius structure.

\begin{lem} \label{lem:CauchySequences}
Let $(Z,\rho)$ be a metric space, $(x_n)_{n \in \mathbb{N}}$ a sequence in $Z$. The following are equivalent:
\begin{enumerate}

\item[1)] The sequence $(x_n)_n$ is either a Cauchy sequence, or $\rho(x_n, y) \xrightarrow{n \rightarrow \infty} \infty$ for all $y \in Z$.

\item[2)] There exists a good pair $y, z \in Z$, such that $\lim \limits_{n,m \rightarrow \infty} crt(x_n, x_m, y, z) = (0:1:1)$.

\item[3)] There exists a good pair $y, z \in Z$, $\lim \limits_{n,m \rightarrow \infty} c(x_n, x_m, y, z) = -\infty$.

\end{enumerate}

Further, if 1 holds, then 2 and 3 hold for all good pairs $y, z \in Z$. In addition, 2 holds for a good pair $y,z$ if and only if 3 holds for the same good pair $y,z$.
\end{lem}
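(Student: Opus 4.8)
The plan is to make the three components of $M$ explicit and then run a short implication cycle. Writing $M=(a,b,c)$ and $(p\mid q)=-\ln d(p,q)$, unwinding the definition of $M_d$ on the quadruple $(x_n,x_m,y,z)$ gives
\[ a=\ln\frac{d(x_n,y)\,d(x_m,z)}{d(x_n,z)\,d(x_m,y)},\qquad c=\ln\frac{d(x_n,x_m)\,d(y,z)}{d(x_n,y)\,d(x_m,z)}, \]
with $b=-a-c$ whenever the quadruple is non-degenerate (property~2). Since $F$, $\overline\Psi$ and their inverses are sequence continuous and their composite carries $(0:1:1)$ to $(0,\infty,-\infty)$, statement~2 for a good pair is equivalent to $M(x_n,x_m,y,z)\to(0,\infty,-\infty)$, which in particular forces $c\to-\infty$; this gives $2\Rightarrow3$ for the same pair for free. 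I would therefore prove the equivalence through the cycle $1\Rightarrow(2\text{ and }3\text{ for every good pair})$, the trivial $2\Rightarrow3$ above, and $3\Rightarrow1$; the refinements then drop out.

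For $1\Rightarrow2,3$ I split into the two alternatives of statement~1, first noting good pairs exist in both: if $(x_n)$ is Cauchy then $d(x_n,w)$ converges in $\mathbb{R}$ for every $w$, so a pair $y,z$ is good exactly when the limits $L_y,L_z$ are positive, and any two points distinct from a possible limit work; if $d(x_n,w)\to\infty$ for all $w$, every pair of distinct points is good. In the Cauchy case $d(x_n,x_m)\to0$ while $d(x_n,y)\to L_y>0$, so $c\to-\infty$; moreover $|d(x_n,y)-d(x_m,y)|\le d(x_n,x_m)\to0$ forces $a\to0$, hence $b\to+\infty$ and $M\to(0,\infty,-\infty)$. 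In the escaping case $|d(x_n,y)-d(x_n,z)|\le d(y,z)$ with both distances tending to $\infty$ gives $a\to0$, and the triangle inequality $d(x_n,x_m)\le d(x_n,y)+d(y,z)+d(x_m,z)$ divided by $d(x_n,y)d(x_m,z)$ tends to $0$, so again $c\to-\infty$ and $M\to(0,\infty,-\infty)$. Both computations hold for every good pair, which is the ``Further'' clause.

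The content is in $3\Rightarrow1$. Assume a good pair $y,z$ with $c\to-\infty$, i.e. $\frac{d(x_n,x_m)}{d(x_n,y)\,d(x_m,z)}\to0$ (the constant $d(y,z)>0$ is harmless), and set $\alpha_n=d(x_n,y)$, so $|\alpha_n-d(x_n,z)|\le d(y,z)$. If $(\alpha_n)$ is bounded, say $\alpha_n\le C$, then $d(x_m,z)\le C+d(y,z)$ and $d(x_n,x_m)\le C(C+d(y,z))\cdot\frac{d(x_n,x_m)}{\alpha_n d(x_m,z)}\to0$, so $(x_n)$ is Cauchy. If $(\alpha_n)$ is unbounded I claim $\alpha_n\to\infty$: otherwise there are infinitely many indices with $\alpha_n\le C$ and infinitely many with $\alpha_m$ arbitrarily large, and for such a pair $d(x_n,x_m)\ge\alpha_m-\alpha_n\ge\alpha_m-C$ while $d(x_m,z)\le\alpha_m+d(y,z)$, so $\frac{d(x_n,x_m)}{\alpha_n d(x_m,z)}\ge\frac{\alpha_m-C}{C(\alpha_m+d(y,z))}\to\frac1C>0$, contradicting $c\to-\infty$ once both indices are taken past the threshold $N_\epsilon$ with $\epsilon<\tfrac1{2C}$. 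Hence $\alpha_n\to\infty$, and $d(x_n,w)\ge\alpha_n-d(w,y)\to\infty$ for every $w$, which is the escaping alternative of statement~1.

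Assembling: $3$ for some good pair gives $1$, which gives $2$ and $3$ for every good pair; together with the free $2\Rightarrow3$ this yields $1\Leftrightarrow2\Leftrightarrow3$, the ``Further'' clause, and the pairwise equivalence ($2$ for a pair $\Rightarrow3$ for it directly, and $3$ for it $\Rightarrow1\Rightarrow2$ for it). The main obstacle I expect is precisely the unbounded subcase of $3\Rightarrow1$: a sequence oscillating between staying near $y$ and running off must be excluded, and this uses the genuinely \emph{joint} double limit in $n,m$ rather than iterated limits. A secondary bookkeeping point is degeneracy: terms with $x_n\in\{y,z\}$ make $crt$ equal $(1:0:1)$ and thus violate both $2$ and $3$, while terms with $x_n=x_m$ already sit at $(0:1:1)$; in the regimes above these occur only for finitely many indices (or harmlessly), so they do not affect the limits, but this should be checked explicitly.
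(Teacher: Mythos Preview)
Your argument is correct and, in several places, cleaner than the paper's. The overall architecture is the same---cycle $1\Rightarrow2$, trivially $2\Rightarrow3$ via the homeomorphism $\overline{\Psi}\circ F$, then $3\Rightarrow1$---but two steps differ in execution.

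For $1\Rightarrow2$ in the Cauchy case you exploit that $|d(x_n,y)-d(x_m,y)|\le d(x_n,x_m)$ makes each $d(x_n,y)$ a Cauchy (hence convergent) real sequence, so $a\to\ln\frac{L_yL_z}{L_zL_y}=0$ drops out immediately. The paper instead runs explicit $\epsilon^{1/4}$, $\epsilon^{3/4}$ estimates on the ratio $\frac{d(x_n,y)d(x_m,z)}{d(x_n,z)d(x_m,y)}$; your route is shorter and uses the triangle inequality in exactly the same way, just packaged more efficiently.

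For $3\Rightarrow1$ the paper does \emph{not} argue directly in the metric space. It first proves a separate auxiliary lemma (Lemma~\ref{lem:AuxiliaryforCauchy}) valid for any quasi-metric inducing a structure with the (corner)-condition, using the quasi-metric inequality $d(x,z)\le K\max(d(x,y),d(y,z))$ throughout, and then specializes. Your argument stays inside the metric world: the reverse triangle inequality gives $d(x_n,x_m)\ge\alpha_m-C$ and the ordinary one gives $d(x_m,z)\le\alpha_m+d(y,z)$, which is all you need to rule out the oscillating case. This is more elementary and self-contained for the statement at hand; the paper's detour is motivated by the fact that it reuses the quasi-metric lemma later (Lemma~\ref{lem:Conditionc} and the completion theorem), where no triangle inequality is available. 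So your proof buys economy here, while the paper's buys reusability downstream.

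Your closing remark about degeneracies is right and worth making explicit: once $c(x_n,x_m,y,z)\to-\infty$ the values $c=+\infty$ (from $x_n=y$) and $c=0$ (from $x_n=z$) are excluded for large $n$, so $\alpha_n>0$ eventually and your ratio estimates apply cleanly.
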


The equivalence of 1) and 2) is stated in Lemma 2.2 of \cite{BeyrerSchroeder}. Furthermore, it is easy to see from the proof that 1) implies 2) for every good pair. We are left to prove ``2) $\Rightarrow$ 3)'' and ``3) $\Rightarrow$ 1)''. For this, we require an auxiliary result. Since it is our goal to generalise Cauchy sequences beyond the realm of metric spaces, we will formulate this result in a more general context.

\begin{lem} \label{lem:AuxiliaryforCauchy}
Let $(Z,M)$ be a  strong M\"obius structure and $\rho$ a quasi-metric that induces $M$. Let $(x_n)_n$ be a sequence in $Z$ and suppose there exists a good pair $y,z \in Z$ such that $c(x_n, x_m, y, z) \xrightarrow{n,m \rightarrow \infty} -\infty$. Then one of the following two statements holds:

\begin{enumerate}

\item[a)] For every $x \in Z \diagdown \{ \omega \}$, there exists some $B_x > 0$, such that $\rho(x_n,x) < B_x$ for all $n$. Furthermore, $\rho(x_n,x_m) \xrightarrow{n,m \rightarrow \infty} 0$. We say that $x_n$ is bounded.

\item[b)] For every $x \in Z \diagdown \{ \omega \}$, we have $\rho(x_n, x) \xrightarrow{n \rightarrow \infty} \infty$. We say that $x_n$ diverges to infinity and write $x_n \rightarrow \infty$.

\end{enumerate}
\end{lem}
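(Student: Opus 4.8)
The plan is to convert the hypothesis on the third component $c$ into a multiplicative estimate on $d$ and then run a dichotomy on the magnitude of $d(x_n,y)$. From the defining formula for the M\"obius structure induced by a (quasi-)metric, the third component is $c(wxyz) = (w|y) + (x|z) - (w|x) - (y|z)$ with $(p|q) = -\ln d(p,q)$, so that
\[ c(x_n x_m y z) = \ln\frac{d(x_n,x_m)\,d(y,z)}{d(x_n,y)\,d(x_m,z)}. \]
Since $d(y,z)$ is a fixed positive constant, the assumption $c(x_n x_m y z) \to -\infty$ is equivalent to $t_{n,m} := \dfrac{d(x_n,x_m)}{d(x_n,y)\,d(x_m,z)} \to 0$. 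Throughout I use only that $d$ is a $K$-quasi-metric with point at infinity $\omega$; the (corner)-condition enters solely to guarantee that such a $d$ exists, via Proposition \ref{prop:cornerconditionatinfinity}, while the good-pair hypothesis guarantees that the denominators above stay finite and do not tend to $0$, so that the displayed identity is non-degenerate. Because $y,z$ are fixed and different from $\omega$, the inequality $d(x_n,z) \le K\max(d(x_n,y),d(y,z))$ together with its symmetric counterpart shows that $d(x_n,y)$ is bounded if and only if $d(x_n,z)$ is bounded, and $d(x_n,y) \to \infty$ if and only if $d(x_n,z) \to \infty$; I will pass freely between the two.

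The heart of the proof is the dichotomy: I claim that either $\sup_n d(x_n,y) < \infty$, or $d(x_n,y) \to \infty$ (in the extended sense, allowing $x_n = \omega$). Suppose both fail. Then there is a subsequence $(x_{n_k})$ with $d(x_{n_k},y) \to \infty$ and a subsequence $(x_{m_j})$ with $d(x_{m_j},y) \le B$, hence also $d(x_{m_j},z) \le C$ for suitable constants $B,C$. For $k$ large enough that $d(x_{n_k},y) > KB$, the inequality $d(x_{n_k},y) \le K\max(d(x_{n_k},x_{m_j}),d(x_{m_j},y))$ forces the maximum to be $d(x_{n_k},x_{m_j})$, whence $d(x_{n_k},x_{m_j}) \ge \frac{1}{K}d(x_{n_k},y)$. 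Substituting into $t_{n_k,m_j}$ yields $t_{n_k,m_j} \ge \frac{1}{KC} > 0$ for all large $k,j$ (the degenerate case $x_{n_k} = \omega$ is identical: there $c(x_{n_k}x_{m_j}yz) = \ln\frac{d(y,z)}{d(x_{m_j},z)}$ is bounded below). This contradicts $t_{n,m} \to 0$. This is the step I expect to be the main obstacle, since it is precisely where a sequence alternating between a bounded cluster and an escaping tail must be excluded, and where the quasi-metric inequality is used crucially.

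Finally I read off the two conclusions. If $\sup_n d(x_n,y) =: B < \infty$, then $d(x_n,z) \le C < \infty$ as well, and for every $x \ne \omega$ the estimate $d(x_n,x) \le K\max(d(x_n,y),d(y,x)) \le K\max(B,d(y,x)) =: B_x$ gives the boundedness asserted in (a); moreover $d(x_n,x_m) = t_{n,m}\,d(x_n,y)\,d(x_m,z)/d(y,z) \le t_{n,m}\,BC/d(y,z) \to 0$, which is the remaining claim of (a). If instead $d(x_n,y) \to \infty$, then for every $x \ne \omega$ and all large $n$ the inequality $d(x_n,y) \le K\max(d(x_n,x),d(x,y))$ forces $d(x_n,x) \ge \frac{1}{K}d(x_n,y) \to \infty$ (and trivially $d(x_n,x) = \infty$ whenever $x_n = \omega$), which is statement (b). Since the two alternatives of the dichotomy are exhaustive, one of (a), (b) always holds, completing the argument.
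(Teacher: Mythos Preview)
Your proof is correct and follows essentially the same strategy as the paper: translate the hypothesis into $\frac{d(x_n,x_m)d(y,z)}{d(x_n,y)d(x_m,z)}\to 0$, establish the bounded/diverging dichotomy, and rule out the mixed case by contradiction. Your handling of that contradiction is in fact cleaner than the paper's: you directly bound $t_{n_k,m_j}\ge \frac{1}{KC}$ using only $d(x_{n_k},x_{m_j})\ge \frac{1}{K}d(x_{n_k},y)$, whereas the paper first invokes $d(x_{m_j},x_{m_{j'}})\to 0$ on the bounded subsequence, fixes an auxiliary index $J$, and runs a longer chain of quasi-metric estimates to reach the same contradiction. (One cosmetic slip: from your definition $t_{n,m}=d(x_n,x_m)/(d(x_n,y)d(x_m,z))$ you get $d(x_n,x_m)=t_{n,m}d(x_n,y)d(x_m,z)$, without the extra $1/d(y,z)$; the conclusion is of course unaffected.)
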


Lemma \ref{lem:AuxiliaryforCauchy} is a generalization of the statement 3) $\Rightarrow$ 1) in Lemma \ref{lem:CauchySequences}.

\begin{rem}
Lemma \ref{lem:CauchySequences} and Lemma \ref{lem:AuxiliaryforCauchy} also hold for extended metric spaces. One can prove 1) $\Rightarrow$ 2) for the case $y = \omega$ separately (and by symmetry, the same proof works for $z = \omega$). The proof of 2) $\Rightarrow$ 3) that we see below immediately generalises to extended metric spaces. For 3) $\Rightarrow$ 1), we can use the fact that by Lemma \ref{lem:AuxiliaryforCauchy}, this statement also holds for quasi-metrics. If $y = \omega$ for a given quasi-metric, we can perform involution of $\rho$ at any point $x \in Z \diagdown \{ y,z \}$. This provides us with a quasi-metric that induces the same  strong M\"obius structure, but neither $y$ nor $z$ lies at infinity.
\end{rem}

\begin{proof}[Proof of Lemma \ref{lem:AuxiliaryforCauchy}]

Let $(x_n)_n$ be a sequence in the  strong M\"obius space $(Z,M)$, let $\rho$ be a quasi-metric that induces $M$ and let $y,z$ be a good pair such that $c(x_n,x_m,y,z) \xrightarrow{n,m \rightarrow \infty} -\infty$. By definition of the  M\"obius structure induced by $\rho$, we can write

\[c(x_n, x_m, y, z) = (x_n | y) + (x_m | z) - (x_n | x_m) - (y | z) = \ln \left( \frac{\rho(x_n,x_m)\rho(y,z)}{\rho(x_n,y)\rho(x_m,z)} \right). \]

Using this equality, the statement $c(x_n, x_m,y,z) \xrightarrow{n,m \rightarrow \infty} -\infty$ becomes equivalent to

\begin{equation} \label{eq:coefficientc}
\frac{\rho(x_n,x_m)\rho(y,z)}{\rho(x_n,y)\rho(x_m,z)} \xrightarrow{n,m \rightarrow \infty} 0.
\end{equation}\\

We will distinguish between two cases, which will turn out to be exactly the distinction between Case a) and Case b). Suppose there exists some $x \in Z \diagdown \{ \omega \}$ and some constant $B > 0$ such that $\rho(x_n, x) < B$ for all $n$. We want to show that we are in Case a).

Since $\rho$ is a quasi-metric, we have that for all $x' \in Z \diagdown \{ \omega \}$,
\[ \rho(x_n, x') \leq K \max(\rho(x_n, x), \rho(x,x')) \leq K \max(B, \rho(x,x')).\]
Therefore, we see that $\rho(x_n,x')$ is bounded for all $x' \in Z \diagdown \{ \omega \}$. In particular, $\rho(x_n,y), \rho(x_n,z)$ are both bounded by some constant $B > 0$. We obtain
\begin{equation*}
\begin{split}
\frac{\rho(x_n,x_m)\rho(y,z)}{\rho(x_n,y)\rho(x_m,z)} & \geq \rho(x_n,x_m) \frac{\rho(y,z)}{B^2}.
\end{split}
\end{equation*}

Since the left-hand-side of this equation goes to zero by assumption, the right-hand-side has to go to zero as well. Hence we see that $\rho(x_n,x_m) \xrightarrow{n,m \rightarrow \infty} 0$.\\

We are left to show that we end up in Case b), whenever there is no $x \in Z \diagdown \{ \omega \}$ such that $\rho(x_n,x)$ is bounded. Suppose $\rho(x_n,x)$ is unbounded for all $x \in Z \diagdown \{ \omega \}$. Then there exists a subsequence $(x_{n_i})_i$ of $(x_n)_n$ such that $\rho(x_{n_i},x) \rightarrow \infty$ for one (and hence all, since $\rho$ is a quasi-metric) $x \in Z \diagdown \{ \omega \}$.

Suppose by contradiction that $\rho(x_n,x)$ does not converge to infinity for one and hence all $x \in Z \diagdown \{ \omega \}$. Then we find another subsequence $(x_{m_j})_j$ of $(x_n)_n$, which is bounded. In particular, we find a constant $B > 0$ such that
\begin{equation*}
\begin{split}
& \rho(x_{m_j}, y) \leq B,\\
& \rho(x_{m_j}, z) \leq B
\end{split}
\end{equation*}
for all $j$. From our treatment of Case a), we know that for this subsequence, $\rho(x_{m_j}, x_{m_{j'}}) \xrightarrow{j,j' \rightarrow \infty} 0$. In particular, we find a number $J$ such that for all $j,j' \geq J$, we have
\[ \rho(x_{m_j},x_{m_{j'}}) < 1. \]

Now, we estimate the distance between the two subsequences $(x_{m_j})_j$, $(x_{n_i})_i$. For this, we need to to take $x_{m_J}$ as an auxiliary point. Since $x_{n_i}$ diverges to infinity, there is a number $I$ such that $\rho(x_{m_J}, x_{n_i}) > \max(K, K \cdot B)$ for all $i \geq I$. Now we use the fact that $\rho$ is a quasi-metric to get that for all $i \geq I, j \geq J$ we have
\[ \max(K, K \cdot B) \leq \rho(x_{m_J}, x_{n_i}) \leq K \max( \rho(x_{m_J}, x_{m_{j}}), \rho(x_{m_{j}}, x_{n_i}) ) = K \rho(x_{m_{j}}, x_{n_i}), \]
where the last equality follows from the fact that $\rho(x_{m_J}, x_{m_j}) < 1$ for all $j \geq J$. Now consider for $i \geq I, j \geq J$
\begin{equation*}
\begin{split}
\frac{\rho(x_{m_j},x_{n_i})\rho(y,z)}{\rho(x_{m_j},y)\rho(x_{n_i},z)} & \geq \frac{\rho(x_{m_j},x_{n_i})\rho(y,z)}{B \rho(x_{n_i},z)}\\
& \geq \frac{\rho(x_{m_j},x_{n_i})\rho(y,z)}{B K \max( \rho(x_{n_i}, x_{m_j}), \rho(x_{m_j},z) )}\\
& = \frac{\rho(x_{m_j},x_{n_i})\rho(y,z)}{B K \rho(x_{n_i}, x_{m_j}) }\\
& = \frac{\rho(y,z)}{BK},
\end{split}
\end{equation*}
where in the second-to-last step we use the fact that $\rho(x_{n_i}, x_{m_j}) \geq \max(1,B) \geq B \geq \rho(x_{m_j},z)$ for all $i \geq I, j \geq J$. This inequality shows that $\frac{\rho(x_{m_j},x_{n_i})\rho(y,z)}{\rho(x_{m_j},y)\rho(x_{n_i},z)}$ is bounded from below by a positive constant. But by assumption, $\frac{\rho(x_{m_j},x_{n_i})\rho(y,z)}{\rho(x_{m_j},y)\rho(x_{n_i},z)}$ converges to zero, a contradiction. We see that, if a subsequence $(x_{n_i})_i$ diverges to infinity, the sequence $(x_n)_n$ has to diverge to infinity as well. Thus, we are in Case b), which completes the proof.

\end{proof}

\begin{proof}[Proof of Lemma \ref{lem:CauchySequences}]
Let $(Z,\rho)$ be a non-extended metric space, $(x_n)_n$ a sequence in $Z$ and $y,z \in Z$ such that $\lim \limits_{n \rightarrow \infty} x_n \neq y,z$.\\

1) $\Rightarrow$ 2): Instead of proving just 1) $\Rightarrow$ 2), which follows directly from \cite{BeyrerSchroeder}, we will also prove the second part of the Lemma, i.e. we will prove that $\lim\limits_{n,m \rightarrow \infty} crt(x_n, x_m, y, z) = (0:1:1)$ for all good pairs $y,z$.\\

Step 1: We start by proving that for every Cauchy sequence, we have
\[ \lim_{n,m \rightarrow \infty} crt(x_n,x_m, y,z) = (0:1:1). \]

Suppose $(x_n)$ is a Cauchy sequence. Note that this implies that $\rho(x_n, x)$ converges for all $x \in Z$. Let $\epsilon > 0$. We find some $N_{\epsilon} \in \mathbb{N}$ such that for all $n, m \geq N_{\epsilon}$, we have $\rho(x_n, x_m) < \epsilon$. Since $y,z$ is a good pair, we can choose $\epsilon$ sufficiently small such that there is an $N_{\epsilon}$ such that additionally, $\rho(x_n,y),\rho(x_n,z) > \epsilon^{\frac{1}{4}}$ for all $n \geq N_{\epsilon}$. Therefore, we get
\begin{equation*}
\begin{split}
\frac{\rho(x_n,x_m)\rho(y,z)}{\rho(x_n,y)\rho(x_m,z)} & < \frac{\epsilon \rho(y,z)}{\rho(x_n,y)\rho(x_m,z)}\\
& < \frac{\epsilon}{\sqrt{\epsilon}}\rho(y,z)\\
& = \sqrt{\epsilon}\rho(y,z).
\end{split}
\end{equation*}

Thus we see that $\frac{\rho(x_n,x_m)\rho(y,z)}{\rho(x_n,y)\rho(x_m,z)} \xrightarrow{n,m \rightarrow \infty} 0$. For symmetry reasons, we immediately see that also $\frac{\rho(x_n,x_m)\rho(y,z)}{\rho(x_n,z)\rho(x_m,y)} \xrightarrow{n,m \rightarrow \infty} 0$.

We are left to show that $\frac{\rho(x_n,y)\rho(x_m,z)}{\rho(x_n,z)\rho(x_m,y)} \xrightarrow{n,m \rightarrow \infty} 1$ in order to prove that $crt(x_n,x_m,y,z) \xrightarrow{n,m \rightarrow \infty} (0:1:1)$. Since $y, z$ is a good pair, we have
\begin{equation*}
\begin{split}
\frac{\rho(x_n,y)\rho(x_m,z)}{\rho(x_n,z)\rho(x_m,y)} & \leq \frac{\rho(x_n,y)(\rho(x_n,z) + \rho(x_n,x_m))}{\rho(x_n,z)(\rho(x_n,y) - \rho(x_n,x_m))}\\
& = \frac{ 1 + \frac{ \rho(x_n, x_m) }{ \rho(x_n, z) } }{ 1 - \frac{ \rho(x_n, x_m) }{ \rho(x_n,y) } } \xrightarrow{ n, m \rightarrow \infty} 1
\end{split}
\end{equation*}
and
\begin{equation*}
\begin{split}
\frac{\rho(x_n,y)\rho(x_m,z)}{\rho(x_n,z)\rho(x_m,y)} & \geq \frac{\rho(x_n,y)(\rho(x_n,z) - \rho(x_n,x_m))}{\rho(x_n,z)(\rho(x_n,y) + \rho(x_n,x_m))}\\
& = \frac{ 1 - \frac{ \rho(x_n, x_m) }{ \rho(x_n, z) } }{ 1 + \frac{ \rho(x_n, x_m) }{ \rho(x_n,y) } } \xrightarrow{ n, m \rightarrow \infty} 1.
\end{split}
\end{equation*}

It follows that $\frac{\rho(x_n,y)\rho(x_m,z)}{\rho(x_n,z)\rho(x_m,y)} \xrightarrow{n,m \rightarrow \infty} 1$ and hence $crt(x_n,x_m,y,z) \xrightarrow{n,m \rightarrow \infty} (0:1:1)$. Note that we relied on the triangle-inequality for this part of the proof.\\

Step 2: We show that if $(x_n)$ diverges to infinity, we get
\[ \lim_{n,m \rightarrow \infty} crt(x_n,x_m, y,z) = (0:1:1). \]

Suppose that $\rho(x_n, x) \rightarrow \infty$ for all $x \in Z$ as $n$ goes to infinity (except for the point $x \in Z$ that may lie at infinity). Then, for any $y, z \in Z$ that do not lie at infinity, we have
\begin{equation*}
\begin{split}
\frac{\rho(x_n,x_m)\rho(y,z)}{\rho(x_n,y)\rho(x_m,z)} & \leq \frac{(\rho(x_n, y) + \rho(y, x_m))\rho(y,z)}{\rho(x_n,y)\rho(x_m,z)}\\
& = \frac{\rho(y,z)}{\rho(x_m,z)} + \frac{\rho(x_m,y)\rho(y,z)}{\rho(x_n,y)\rho(x_m,z)}\\
& \leq \frac{\rho(y,z)}{\rho(x_m,z)} + \frac{(\rho(x_m,z) + \rho(z,y))\rho(y,z)}{\rho(x_n,y)\rho(x_m,z)}\\
& = \frac{\rho(y,z)}{\rho(x_m,z)} + \frac{\rho(y,z)}{\rho(x_n,y)} + \frac{\rho(y,z)^2}{\rho(x_n,y)\rho(x_m,z)}\\
& \xrightarrow{n, m \rightarrow \infty} 0.
\end{split}
\end{equation*}

We are left to show that $\frac{\rho(x_n,y)\rho(x_m,z)}{\rho(x_n,z)\rho(x_m,y)} \xrightarrow{n,m \rightarrow \infty} 1$. For this, we do the following estimate.
\begin{equation*}
\begin{split}
\frac{\rho(x_n,y)\rho(x_m,z)}{\rho(x_n,z)\rho(x_m,y)} & \leq \frac{(\rho(x_n,z) + \rho(y,z))(\rho(x_m,y) + \rho(y,z))}{\rho(x_n,z)\rho(x_m,y)}\\
& = 1 + \frac{\rho(y,z)}{\rho(x_n,z)} + \frac{\rho(y,z)}{\rho(x_m,y)} + \frac{\rho(y,z)^2}{\rho(x_n,z)\rho(x_m,y)}\\
& \xrightarrow{n, m \rightarrow \infty} 1.
\end{split}
\end{equation*}
In the same way, we have
\begin{equation*}
\begin{split}
\frac{\rho(x_n,y)\rho(x_m,z)}{\rho(x_n,z)\rho(x_m,y)} & \geq \frac{(\rho(x_n,z) - \rho(y,z))(\rho(x_m,y) - \rho(y,z))}{\rho(x_n,z)\rho(x_m,y)}\\
& = 1 - \frac{\rho(y,z)}{\rho(x_n,z)} - \frac{\rho(y,z)}{\rho(x_m,y)} + \frac{\rho(y,z)^2}{\rho(x_n,z)\rho(x_m,y)}\\
& \xrightarrow{n, m \rightarrow \infty}Ê1.
\end{split}
\end{equation*}
From these two estimates, we conclude that $\frac{\rho(x_n,y)\rho(x_m,z)}{\rho(x_m,y)\rho(x_n,z)} \xrightarrow{n,m \rightarrow \infty} 1$. This concludes the proof of step 2 and the proof that 1) $\Rightarrow$ 2).\\

2) $\Rightarrow$ 3): Recall that, by definition, $c(w, x, y, z) = \ln \left( \frac{ \rho(w, x) \rho(y,z) }{ \rho(w,y) \rho(x,z) } \right)$, which is a continuous map with respect to the metric topology. In particular, if $crt(w, x, y, z) \rightarrow (0 : 1 : 1)$, then $\ln \left( \frac{ \rho(w, x) \rho(y,z) }{ \rho(w,y) \rho(x,z) } \right) \rightarrow -\infty$. We see that 2) $\Rightarrow$ 3). In particular, if 2) holds for a given pair $y,z$ then 3) holds for the same pair $y,z$.\\

3) $\Rightarrow$ 1): This is a special case of Lemma \ref{lem:AuxiliaryforCauchy}. Since we have seen that 1) $\Rightarrow$ 2) for all good pairs $y,z$ we also see that, if 3) holds for a good pair $y,z$ then 2) holds for the same good pair $y,z$. This concludes the proof of Lemma \ref{lem:CauchySequences}
\end{proof}

Among other things, Lemma \ref{lem:CauchySequences} tells us that for metric spaces, we only need to find one good pair $y, z$ that satisfies condition 2 or 3 to get the same condition for all good pairs $y, z$ that aren't the limit of $(x_n)_n$. It would be good to have the same condition in any strong M\"obius space that isn't necessarily induced by a metric. Then we could define a sequence in a strong M\"obius space to be a Cauchy sequence if for one good pair $y,z$ and hence all nice pairs, we have $crt(x_n,x_m,y,z) \rightarrow (0:1:1)$, which would be much easier to check in practice than if we had to check all good pairs. The next lemma tells us, that this is actually true in the case of condition 3.

\begin{lem} \label{lem:Conditionc}
Let $(Z,M)$ be a  strong M\"obius space. Let $(x_n)_n$ be a sequence in $Z$. Suppose there is a good pair $y, z$ such that

\[c(x_n,x_m,y,z) \xrightarrow{n,m \rightarrow \infty} -\infty.\]

Then the same holds for all good pairs $y', z' \in Z$.
\end{lem}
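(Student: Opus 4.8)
The plan is to reduce the statement to the quasi-metric dichotomy of Lemma \ref{lem:AuxiliaryforCauchy}. Because $M$ satisfies the (corner)-condition, Theorem \ref{thm:boundedquasimetrics} provides a quasi-metric $d$ inducing $M$; I fix such a $d$, choosing it (by an involution, or by passing to the bounded model of Theorem \ref{thm:boundedquasimetrics}) so that none of the four points $y,z,y',z'$ under consideration lies at its point at infinity. For such a $d$ one has the intrinsic identity
\[ c(x_n,x_m,u,v)=\ln\!\left(\frac{d(x_n,x_m)\,d(u,v)}{d(x_n,u)\,d(x_m,v)}\right), \]
so the hypothesis reads $\tfrac{d(x_n,x_m)d(y,z)}{d(x_n,y)d(x_m,z)}\to 0$, and the conclusion for a good pair $y',z'$ is exactly $\tfrac{d(x_n,x_m)d(y',z')}{d(x_n,y')d(x_m,z')}\to 0$. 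Applying Lemma \ref{lem:AuxiliaryforCauchy} to the given good pair $y,z$ puts $(x_n)_n$ into one of two cases that concern \emph{all} points of $X$ and hence do not refer to the pair: (a) $(x_n)_n$ is bounded and $d(x_n,x_m)\to 0$, or (b) $d(x_n,x)\to\infty$ for every $x\in X\setminus\{\omega\}$. I now test an arbitrary good pair $y',z'$ against each case.

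In Case (a) the crucial preliminary is to upgrade the defining property of a good pair, ``$d(x_n,y')$ does not converge to $0$'', into the uniform bound $\inf_n d(x_n,y')>0$ (and the same for $z'$). This is where the quasi-metric inequality is used: were $d(x_{n_k},y')\to 0$ along a subsequence, then $d(x_m,y')\le K\max(d(x_m,x_{n_k}),d(x_{n_k},y'))$ together with $d(x_n,x_m)\to 0$ would force $d(x_m,y')\to 0$, contradicting goodness of $y',z'$. Given lower bounds $d(x_n,y')\ge c_1>0$ and $d(x_m,z')\ge c_2>0$, the ratio is at most $\tfrac{d(x_n,x_m)d(y',z')}{c_1c_2}\to 0$.

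In Case (b) I bound the numerator by the quasi-metric inequality $d(x_n,x_m)\le K\max(d(x_n,y'),d(x_m,y'))$ and split on which term attains the maximum. If it is $d(x_n,y')$, the ratio is at most $K\,d(y',z')/d(x_m,z')$. If it is $d(x_m,y')$, I observe that $d(x_m,z')\to\infty$ gives $d(x_m,y')\le K\max(d(x_m,z'),d(z',y'))=K\,d(x_m,z')$ for large $m$, so the ratio is at most $K^2 d(y',z')/d(x_n,y')$. Since $d(x_n,y')\to\infty$ and $d(x_m,z')\to\infty$, both bounds tend to $0$; hence $\tfrac{d(x_n,x_m)d(y',z')}{d(x_n,y')d(x_m,z')}\to 0$, i.e. $c(x_n,x_m,y',z')\to-\infty$.

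The step I expect to be the main obstacle is the uniform-lower-bound upgrade in Case (a): converting the asymptotic statement ``does not converge to $0$'' into a genuine positive infimum is precisely what makes the estimate uniform in $n,m$, and it is exactly the place where having a quasi-metric rather than a mere semi-metric — guaranteed by the (corner)-condition — is indispensable. A lesser bookkeeping point is to keep all four test points off the point at infinity so that the displayed formula for $c$ applies verbatim.
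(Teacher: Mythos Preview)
Your proof is correct and follows essentially the same route as the paper's: invoke Lemma \ref{lem:AuxiliaryforCauchy} to obtain the bounded/divergent dichotomy, then in the bounded case upgrade ``$d(x_n,y')$ does not converge to $0$'' to a uniform positive lower bound via the quasi-metric inequality, and in the divergent case bound $d(x_n,x_m)$ by two applications of the quasi-metric inequality. The paper organises the Case (b) estimate as $\frac{d(x_n,x_m)d(y',z')}{d(x_n,y')d(x_m,z')}\le K^2\frac{d(y',z')}{\min(d(x_n,y'),d(x_m,z'))}$ rather than your case split on which of $d(x_n,y'),d(x_m,y')$ is larger, but the content is the same; your explicit remark about keeping $y,z,y',z'$ away from the point at infinity is a point the paper leaves implicit in its definition of good pair.
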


\begin{proof}

Let $\rho$ be a quasi-metric that induces $M$. By Lemma \ref{lem:AuxiliaryforCauchy}, we know that $(x_n)$ is either bounded or diverges to infinity. Let $y',z'$ be a good pair. As we have seen in the proofs of Lemma \ref{lem:CauchySequences} and \ref{lem:AuxiliaryforCauchy}, we get the right convergence of $c(x_n,x_m,y',z')$ if $\frac{\rho(x_n,x_m)\rho(y',z')}{\rho(x_n,y')\rho(x_m,z')}$ converges to zero.\\

Case 1: Suppose $(x_n)_n$ is bounded. Since $y',z'$ is a good pair, we find some $\epsilon > 0$ and a subsequence $(x_{n_i})_i$ such that $\rho(x_{n_i},y') \geq \epsilon$ for all $i$. From Lemma \ref{lem:AuxiliaryforCauchy}, we know that $\rho(x_n, x_m) \xrightarrow{n,m \rightarrow \infty} 0$ and we find a number $N$ such that for all $n,m \geq N, \rho(x_n,x_m) < \frac{\epsilon}{2K}$. Thus, we have for all $n \geq N$
\begin{equation*}
\begin{split}
\epsilon \leq \rho(x_{n_i}, y') \leq K \max( \rho(x_{n_i}, x_n), \rho(x_n, y') ).
\end{split}
\end{equation*}

Since $K \rho(x_{n_i},x_n) \leq \frac{\epsilon}{2} < \epsilon$, we see that
\[ \frac{\epsilon}{K} \leq \frac{1}{K} \rho(x_{n_i},y') \leq \rho(x_n,y') \]
for $n \geq N$. This implies that the sequence $(x_n)_n$ stays away from $y'$ for large $n$, specifically, $\rho(x_n, y') \geq \frac{\epsilon}{K}$ for $n \geq N$. The same is true for $(x_n)_n$ and $z'$ and some other $\tilde{\epsilon} > 0$. Hence, we have
\begin{equation*}
\begin{split}
\frac{\rho(x_n,x_m)\rho(y',z')}{\rho(x_n,y')\rho(x_m,z')} \leq K^2\frac{\rho(x_n,x_m)\rho(y',z')}{\epsilon \tilde{\epsilon}} \xrightarrow{n,m \rightarrow \infty} 0.
\end{split}
\end{equation*}

We see that $\frac{\rho(x_n,x_m)\rho(y',z')}{\rho(x_n,y')\rho(x_m,z')}$ converges to zero and hence $c(x_n, x_m, y', z') \rightarrow -\infty$.\\

Case 2: Suppose $x_n$ diverges to infinity. We can find a number $N$ such that for all $n \geq N, \rho(x_n, y')\geq \rho(y',z')$ and $\rho(x_n,z') \geq \rho(y',z')$. Then we have
\begin{equation*}
\begin{split}
\frac{\rho(x_n,x_m)\rho(y',z')}{\rho(x_n,y')\rho(x_m,z')} & \leq \frac{K \max(\rho(x_n, y'), \rho(y',x_m)) \rho(y',z')}{\rho(x_n,y')\rho(x_m,z')}\\
& \leq \frac{K^2 \max( \rho(x_n,y'), \rho(y', z'), \rho(z',x_m) ) \rho(y',z')}{\rho(x_n,y')\rho(x_m,z')}\\
& = K^2 \frac{\rho(y',z')}{\min(\rho(x_n,y'),\rho(x_m,z'))} \rightarrow 0.
\end{split}
\end{equation*}

Hence, we see that also in this case, $\frac{\rho(x_n,x_m)\rho(y',z')}{\rho(x_n,y')\rho(x_m,z')}$ converges to zero and, therefore, $c(x_n, x_m, y', z') \rightarrow -\infty$. This completes the proof.

\end{proof}

One might hope that an analogous statement for condition 2 holds. However, the following example illustrates that Lemma \ref{lem:AuxiliaryforCauchy} and \ref{lem:Conditionc} are the best that we can hope for.

\begin{exam} \label{exam:ExampleCauchy}
Consider the circle, represented as $S^1 = \mathbb{R} \diagup 4\mathbb{Z}$. We will mostly use representatives in $[-2, 4]$ to represent points on the circle. Consider the space $Z := S^1 \diagdown \{ [0] \}$ and define a map $\rho : Z \times Z \rightarrow [0,\infty)$ by

\begin{equation*}
\rho([x],[y]) := \begin{cases} \vert x - y \vert & (x,y) \in (0,2]^2 \cup [1,3]^2 \cup [2,4)^2 \cup ([-1,1] \diagdown \{ 0 \})^2\\
2 \vert x - y \vert & (x,y) \in \left( (0,1) \times (2,3) \right) \cup \left( (2,3) \times (0,1) \right) \cup\\
& \hspace{40pt} \cup \left( (1,2) \times (3,4) \right) \cup \left( (3,4) \times (1,2) \right).
\end{cases}
\end{equation*}

Notice the use of different representatives depending on the case. Geometrically, $(Z,\rho)$ can be thought of as follows. Think of $Z$ as a subset of the circle of circumference $4$ with the shortest path metric. This circle can be embedded into $\mathbb{R}^2$ such that it is centered at the origin, i.e. it is the boundary of a disk centered at the origin.

We can now consider the intersection of the circle with each quarter of $\mathbb{R}^2$. We call them the `upper-right', `upper-left', `lower-left', `lower-right' segment of $S^1$, based on their position in the standard coordinate system of $\mathbb{R}^2$.

The distance $\rho(x,y)$ between two points $x,y$ is now defined to be the same as on $S^1$ if $x,y$ lie on the same segment of $S^1$ or if they lie on segments that are neighbours of each other. If $x,y$ lie on segments of $S^1$ that lie opposite to each other, then $\rho(x,y)$ is exactly twice the length of the path from $x$ to $y$ that passes through the point $(0,-1)$.\\

A straightforward computation with several case-distinctions shows that $\rho$ is a $12$-quasi-metric. Thus, we get a  strong M\"obius space $(Z, M_\rho)$. Consider now the following sequence in $Z$:
\[ x_n = \left[\frac{1}{n}(-1)^n\right]. \]

One can show that there is a good pair for $(x_n)_n$ that satisfies condition 3, but not condition 2. Furthermore, one can even find another good pair for $(x_n)_n$ that satisfies both conditions 2 and 3. Specifically, choose $y = 1.5$, $z = -1.5$ for the first case and $y = 1.5$, $z = 1.6$ for the second case.

The issue at hand is that even if we understand the convergence behaviour of $\frac{\rho(x_n,x_m)\rho(y,z)}{\rho(x_n,y)\rho(x_m,z)}$, we cannot control the convergence behaviour of $\frac{\rho(x_n,y)\rho(x_m,z)}{\rho(x_n,z)\rho(x_m,y)}$ if $\rho$ is not a metric. So we have found a quasi-metric -- and thus a  strong M\"obius structure $M_\rho$ -- for which the statement "3) $\Rightarrow$ 2)" that we have proven for metrics in Lemma \ref{lem:CauchySequences}, does not hold.\\

This example illustrates the relationship between the different possible conditions one could use to define Cauchy sequences in a  strong M\"obius space. If condition 2 holds for one good pair $y, z$, this does not imply that condition 2 holds for all good pairs, unless we work with a metric space. In the same way, if condition 3 holds for all good pairs, this doesn't imply the same for condition 2. However, from Lemma \ref{lem:Conditionc} we know that, if condition 3 holds for one good pair, it holds for all of them. 

\end{exam}

Example \ref{exam:ExampleCauchy} leads us to the following definition of Cauchy sequences in a  strong M\"obius space.

\begin{mydef} \label{def:CauchySequence}
Let $(Z,M)$ be a  strong M\"obius space. A sequence $(x_n)_n$ in $Z$ is called a {\it Cauchy sequence} if and only if for one (and hence all) good pairs $y,z$ in $Z$, we have
\[ c(x_n,x_m,y,z) \xrightarrow{n,m \rightarrow \infty} -\infty. \]
\end{mydef}

\begin{mydef} \label{def:Completeness}
A  strong M\"obius space $(Z,M)$ is called {\it complete} if and only if all Cauchy sequences in $(Z,M)$ converge.
\end{mydef}

Using the previous lemmata, the following results are easy to see.

\begin{prop} \label{prop:MobiusInvarianceofCauchySequencesandCompleteness}

Let $(Z,M), (Z',M')$ be two  strong M\"obius spaces, $f : Z \rightarrow Z'$ a M\"obius equivalence between them.

\begin{enumerate}

\item[1)] Let $(x_n)_n$ be a sequence in $Z$. Then $(x_n)_n$ is a Cauchy sequence in $(Z,M)$ if and only if $(f(x_n))_n$ is a Cauchy sequence in $(Z',M')$.

\item[2)] The  strong M\"obius space $(Z,M)$ is complete if and only if $(Z',M')$ is.

\end{enumerate}
\end{prop}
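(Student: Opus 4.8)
The plan is to exploit the fact that a Möbius equivalence not only preserves $M$ on admissible quadruples but, by Lemma \ref{lem:homeomorphism}, is also a homeomorphism between the two Möbius topologies. The single observation that makes everything routine is that $f$ lets us transport any inducing semi-metric from $X$ to $X'$: if $d$ is a semi-metric inducing $M$, define $d'$ on $X'$ by $d'(a,b) := d(f^{-1}(a), f^{-1}(b))$. Then $f : (X,d) \to (X', d')$ is by construction an isometry, and since $f$ preserves the Möbius structure, $d'$ induces $M'$. Indeed, for every admissible quadruple $P$ in $X$ one has $M'_{d'}(f(P)) = M_d(P) = M(P) = M'(f(P))$, where $M_d$ denotes the structure induced by $d$; as $f$ is onto, this forces the structure induced by $d'$ to equal $M'$. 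Moreover $d$ has a point at infinity $\omega$ if and only if $d'$ has the point at infinity $f(\omega)$.

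For part 1, I would fix such a $d$ (for instance a bounded quasi-metric supplied by Theorem \ref{thm:boundedquasimetrics}, so that Definition \ref{def:CauchySequence} applies on both sides) together with its transport $d'$. First I check that good pairs correspond: since $d'(f(x_n), f(y)) = d(x_n, y)$ and $f$ is a bijection respecting the points at infinity, $(y,z)$ is a good pair for $(x_n)_n$ with respect to $d$ if and only if $(f(y), f(z))$ is a good pair for $(f(x_n))_n$ with respect to $d'$; in particular such pairs exist on one side exactly when they exist on the other. Next, because $f$ is a Möbius equivalence, writing $c$ and $c'$ for the third components of $M$ and $M'$, the value is preserved: $c(x_n, x_m, y, z) = c'(f(x_n), f(x_m), f(y), f(z))$ for all admissible quadruples. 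Combining the two, the defining condition $c(x_n, x_m, y, z) \xrightarrow{n,m\to\infty} -\infty$ for a good pair $y,z$ in $X$ holds precisely when $c'(f(x_n), f(x_m), f(y), f(z)) \xrightarrow{n,m\to\infty} -\infty$ for the good pair $f(y), f(z)$ in $X'$. By Definition \ref{def:CauchySequence}, together with Lemma \ref{lem:Conditionc} (which guarantees that the condition is independent of the chosen good pair), this says exactly that $(x_n)_n$ is Cauchy if and only if $(f(x_n))_n$ is Cauchy. Applying the same reasoning to the Möbius equivalence $f^{-1}$ yields the reverse direction, which is in any case already symmetric, finishing part 1.

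Part 2 then follows formally from part 1 and the homeomorphism property. By Lemma \ref{lem:homeomorphism}, $(x_n)_n$ converges to $p$ in $X$ if and only if $(f(x_n))_n$ converges to $f(p)$ in $X'$. Suppose $X$ is complete and let $(y_n)_n$ be any Cauchy sequence in $X'$. By part 1 applied to $f^{-1}$, the sequence $(f^{-1}(y_n))_n$ is Cauchy in $X$, hence converges to some $p$; by the homeomorphism property $(y_n)_n$ converges to $f(p)$, so $X'$ is complete. The converse is obtained by interchanging the roles of $X$ and $X'$.

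I expect the only genuine content to be the bookkeeping in the transport step: verifying that $d'$ really induces $M'$ and, above all, that the auxiliary notion of a good pair, which is phrased in terms of a semi-metric and its point at infinity rather than intrinsically, is carried bijectively across $f$. Everything else is a direct translation of the definitions using the already-established homeomorphism and the ``one good pair suffices'' principle of Lemma \ref{lem:Conditionc}.
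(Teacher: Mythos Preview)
Your proposal is correct and follows essentially the same route as the paper: use that $f$ preserves the third component $c$ of $M$, check that good pairs correspond under $f$, and then invoke Lemma~\ref{lem:homeomorphism} for the convergence statement in part 2. The only difference is that you introduce an explicit transported semi-metric $d' = d\circ(f^{-1}\times f^{-1})$ to verify the good-pair correspondence, whereas the paper simply appeals to $f$ being a homeomorphism (Lemma~\ref{lem:homeomorphism}) to conclude that $f(y),f(z)$ is again a good pair; your version is a bit more explicit on this bookkeeping but not materially different.
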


\begin{proof}
Proof of 1): The sequence $(x_n)_n$ is a Cauchy sequence if and only if for some good pair $y,z$ in $Z$, we have

\[ c(x_n,x_m,y,z) \rightarrow -\infty. \]

Since $f$ is a M\"obius equivalence, this implies

\[ c'(f(x_n),f(x_m), f(y),f(z)) = c(x_n,x_m,y,z) \rightarrow -\infty. \]

Since $f$ is a homeomorphism by Lemma \ref{lem:homeomorphism} and $y,z$ is a good pair, so is $f(y),f(z)$ for $(f(x_n))_n$. Thus, $(f(x_n))_n$ is a Cauchy sequence in $(Z',M')$.\\

Proof of 2): Suppose $(Z,M)$ is complete and let $(x'_n)_n$ be a Cauchy sequence in $(Z',M')$. By the first part of the Proposition, $(f^{-1}(x'_n))_n$ is a Cauchy sequence in $(Z,M)$ which converges to some $x \in Z$ by completeness. Since $f$ is a homeomorphism, $(x'_n)_n$ has to converge to $f(x)$. This implies completeness.

\end{proof}

The notion of completeness defined above compares to the notion of completeness defined in metric spaces as follows:

\begin{thm}\label{thm:MetricvsMobiusCompletion}
Let $(Z,\rho)$ be a (possibly extended) metric space and denote the induced M\"obius structure by $M$. Equivalent are:

\begin{enumerate}

\item[1)] $(Z,M)$ is complete as a  strong M\"obius space

\item[2)] $(Z,\rho)$ is complete as a metric space and is either bounded or has a point at infinity.

\end{enumerate}
\end{thm}

\begin{proof}
1) $\Rightarrow$ 2): Suppose, $(Z,M)$ is complete as a  strong M\"obius space and let $(x_n)_n$ be a Cauchy sequence in the metric sense. By Lemma \ref{lem:CauchySequences}, $(x_n)_n$ is also a Cauchy sequence in the M\"obius sense. Hence, $(x_n)$ has to converge in M\"obius topology. Since the M\"obius topology is the same as the metric topology on a metric space by Theorem \ref{thm:metrictopology}, $(x_n)_n$ converges in metric topology and $(Z,\rho)$ is complete in the metric sense.

2) $\Rightarrow$ 1): Suppose $(Z,\rho)$ is complete as a metric space and let $(x_n)_n$ be a Cauchy sequence in the M\"obius sense. By Lemma \ref{lem:CauchySequences}, $(x_n)_n$ is either a Cauchy sequence in the metric sense, or it diverges to infinity. If it is a Cauchy sequence in the metric sense, it converges in metric topology (and thus in M\"obius topology) by metric completeness. If $x_n$ diverges to infinity, the metric space cannot be bounded. Hence, it has a point at infinity by assumption and $x_n$ converges to the point at infinity in metric and M\"obius topology.
\end{proof}




\section{Constructing the completion} \label{sec:Completion}

Now that we have a notion of Cauchy sequences and a notion of completeness for  strong M\"obius spaces, an obvious question is whether every strong M\"obius space has a naturally unique completion, as metric spaces do.

Certainly, if we take a metric space $(Z,\rho)$ and consider the induced  M\"obius structure $M$, the metric completion $(\overline{Z},\overline{\rho})$ is either complete with respect to the induced  M\"obius structure $\overline{M}$, which is just an extension of $M$, or one has to add one point at infinity to make it complete in the M\"obius sense. Adding a point at infinity doesn't change that $Z$ is dense in its completion and it is easy to see that uniqueness up to isometry for the metric case implies uniqueness up to M\"obius equivalence (even up to isometry) in the M\"obius sense.\\

We want to see whether we can create a completion even beyond the metric case. For this, we will do the same construction that is used to construct the metric completion. Let $(Z,crt)$ be a  strong M\"obius space. (We will talk about the necessary extra condition later.) Define the set
\[ \overline{Z} := \{ (x_n)_n \vert (x_n) \text{ a Cauchy sequence in } (Z,crt)  \} \diagup \sim \]

where $(x_n) \sim (x'_n)$ if and only if for every pair $y \neq z$ in $Z$ that is a good pair for both $(x_n)$ and $(x'_n)$, we have
\[ c(x_n,x'_n,y,z) \rightarrow -\infty. \]

There is a canonical embedding of $Z$ into $\overline{Z}$ by sending $x$ to the constant sequence $x_n = x$. This is clearly a Cauchy sequence and the map $x \mapsto [(x)_n]$ is injective, since two different constant sequences are not equivalent in the sense defined above.\\

The next step is to extend the  M\"obius structure $crt$ to $\overline{Z}$. We would like to define
\[ \overline{crt}([(w_n)],[(x_n)],[(y_n)],[(z_n)]) := \lim_{n \rightarrow \infty} crt(w_n,x_n,y_n,z_n). \]

There are two questions that arise immediately when stating this definition. Does the limit on the right-hand-side exist and is it independent of the choice of representative of a point $[(w_n)] \in \overline{Z}$? In general, the answer to these two questions is no. The reason for that has already appeared in example \ref{exam:ExampleCauchy}, namely that, if $\rho(x_n,x_m) \rightarrow 0$, we cannot make sure that $\rho(x_n,y)$ converges for all $y \in Z$. Specifically, the sequence $x_n$ discussed in example \ref{exam:ExampleCauchy} satisfies $crt(x_{2n},x_{2n+1},y,z) \rightarrow (0:1:4)$ and $crt(x_{2n}, x_{2n+2},y,z) \rightarrow (0:1:1)$. Therefore $\lim \limits_{n,m \rightarrow \infty}crt(x_n,x_m,y,z)$ does not exist. This example is a special case that will appear in the definition of $\overline{crt}$ given above and makes this construction not well-defined in general.

As mentioned in example \ref{exam:ExampleCauchy}, the problem at hand is that we cannot control the behaviour of $\frac{\rho(x_n,y)\rho(x_m,z)}{\rho(x_m,y)\rho(x_n,z)}$ for a Cauchy sequence $(x_n)$. If we knew that $crt(w_n,x_n,y_n,z_n)$ could only converge to points in $\mathbb{R}P^2$ that are allowed to be obtained by a  M\"obius structure, then we could resolve this problem (as we will see below). The following property makes sure that these issues cannot arise.

\begin{mydef} \label{mydef:symmetrycondition}
A M\"obius structure $crt$ or a  M\"obius space $(Z,crt)$ satisfies the (symmetry)-condition if and only if
\[ \overline{\Ima(crt)} \subseteq \overline{\Delta} = \{ (a:b:c) \vert a,b,c > 0 \} \cup \{ (0:1:1), (1:0:1), (1:1:0) \} \]
where $\overline{\Ima(crt)}$ denotes the closure of the image of $crt$ in $\mathbb{R}P^2$.
\end{mydef}

\begin{figure} 
\begin{tikzpicture}
\draw [dashed] (6,0) -- (9,0);
\draw [dashed] (6,0) -- (7.5, 2.6);
\draw [dashed] (9,0) -- (7.5, 2.6);
\draw [fill] (6.75,1.3) circle [radius=0.05];
\draw [fill] (8.25,1.3) circle [radius=0.05];
\draw [fill] (7.5,0) circle [radius=0.05];
\draw [thick] (6.75,1.3) to [out=240, in=200] (6.8,0.7) to [out=20, in=120] (7.125,0.65) to [out=300, in=30] (7.01,0.343) to [out=220, in=180] (7.5,0) to [out=0, in=320] (7.99,0.343) to [out=140, in=240] (7.875, 0.65) to [out=60, in=160] (8.2,0.7) to [out=340, in=300] (8.25,1.3) to [out=120, in=80] (7.706, 1.555) to [out=260, in=0] (7.5, 1.299) to [out=180, in=280] (7.292, 1.552) to [out=100, in=60] (6.75,1.3);
\node [below left] at (5.7,-0.3) {$(1:0:0)$};
\node [below right] at (9.3,-0.3) {$(0:1:0)$};
\node [above] at (7.5,3) {$(0:0:1)$};
\node at (7.5,1.0) {$\Ima(crt)$};
\end{tikzpicture}
\caption{A  M\"obius structure $crt$ satisfies the (symmetry)-condition if and only if no point in the boundary of $\overline{\Delta}$ can be approximated by a sequence of points in $\Ima(\Delta)$ except for $(\frac{1}{2} : \frac{1}{2} : 0), (\frac{1}{2} : 0 : \frac{1}{2})$ and $(0 : \frac{1}{2} : \frac{1}{2})$. In other words, the image doesn't touch the boundary at any other than those three points.}
\end{figure}
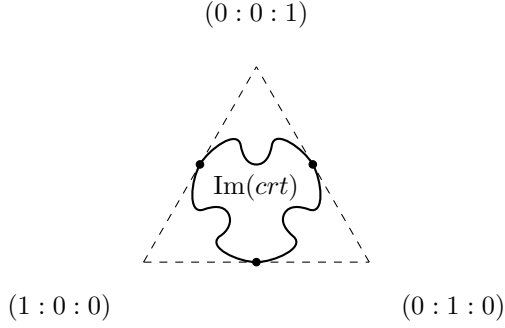

To interpret this definition, it is useful to think of $\Delta \subset \mathbb{R}P^2$ as a triangle. Specifically, consider the triangle $\{ (x, y, z) \in \mathbb{R} \vert x + y + z = 1, x, y, z \geq 0 \}$. The projection of this triangle onto $\mathbb{R}P^2$ is exactly the topological closure of $\Delta$. The (symmetry)-condition tells us that any sequence of cross-ratio-triples $crt(w_n,x_n,y_n,z_n)$ can only accumulate at points in the interior of this triangle or at one of the three distinct points on the boundary of the triangle that are assumed by degenerate quadruples. It turns out that this is the property needed to construct a completion.

\begin{thm} \label{thm:Completion}
Let $(Z, M)$ be a M\"obius space that satisfies the (symmetry)-condition. Then there exists a complete strong M\"obius space $(\overline{Z}, \overline{crt})$ with a M\"obius embedding $i_Z : Z \hookrightarrow \overline{Z}$ (i.\,e.\,$\overline{crt}(i_Z(w), i_Z(x), i_Z(y), i_Z(z)) = crt(w, x, y, z)$ for all admissible quadruples $(w, x, y, z)$) such that $i_Z(Z)$ is dense in $\overline{Z}$.

Furthermore, if $(Z', crt')$ is a complete strong M\"obius space, such that there exists a M\"obius embedding $i : Z \hookrightarrow Z'$ such that $i(Z)$ is dense in $Z'$, then there exists a unique M\"obius equivalence $f : \overline{Z} \rightarrow Z'$ such that $i = f \circ i_Z$.

\end{thm}

The space $(\overline{Z}, \overline{crt})$ is going to be the one constructed above. Suppose $(Z,crt)$ satisfies the (symmetry)-condition. Let $\rho$ be a quasi-metric inducing $crt$, $(x_n)$ a Cauchy sequence in the M\"obius sense and $y, z$ a good pair for $(x_n)$. By symmetry of $x_n,x_m$ we see that $\frac{\rho(x_n,x_m)\rho(y,z)}{\rho(x_n,y)\rho(x_m,z)}$ and $\frac{\rho(x_n,x_m)\rho(y,z)}{\rho(x_m,y)\rho(x_n,z)}$ both converge to zero as $n,m$ tend to infinity. Therefore, the sequence $crt(x_n,x_m,y,z)$ can be written in the form $(a_n : b_n : c_n)$ with all three entries being non-negative, where we scale $a_n,b_n,c_n$ such that $a_n+b_n+c_n = 2$. By the convergence statements above, $a_n$ has to converge to zero. Since $crt$ satisfies the (symmetry)-condition, the only point $(0:b:c)$ that can be approximated arbitrarily well in $\Ima(crt)$ is $(0:1:1)$. Therefore, $crt(x_n,x_m,y,z) \xrightarrow{n, m \rightarrow \infty} (0:1:1)$.

The (symmetry)-condition allows us to prove a result about convergence that will be useful in proving Theorem \ref{thm:Completion}.

\begin{prop} \label{prop:Convergence}
Let $(Z,crt)$ be a  strong M\"obius structure satisfying the (symmetry)-condition. Let $(x_n)$, $(y_n)$ be Cauchy sequences in $Z$, $y \in Z$ and $\rho$ a quasi-metric that induces $crt$ and has a point at infinity (e.g. $\rho = \rho_A$). Then $\rho(x_n,y)$ and $\rho(x_n, y_n)$ converge, possibly to infinity.
\end{prop}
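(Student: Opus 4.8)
The plan is to combine the dichotomy furnished by Lemma \ref{lem:AuxiliaryforCauchy} with the (symmetry)-condition, which is precisely the ingredient that forces the cross-ratio-triple of a Cauchy sequence to converge to the single harmless boundary point $(0:1:1)$. Write $\omega$ for the point at infinity of $d$. If $y = \omega$ there is nothing to prove, since $d(x_n,\omega) = \infty$ for all $n$; so I would assume $y \neq \omega$. By Lemma \ref{lem:AuxiliaryforCauchy} the Cauchy sequence $(x_n)$ is either bounded with respect to $d$ or diverges to infinity, and I would treat these two cases separately.

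If $(x_n)$ diverges to infinity, then by the very definition of that property $d(x_n,y) \to \infty$, so $d(x_n,y)$ converges (to infinity) and we are done. The substantive case is when $(x_n)$ is bounded. If $d(x_n,y) \to 0$ there is again nothing to prove, so assume in addition that $d(x_n,y)$ does not tend to zero; then $y,\omega$ form a good pair for $(x_n)$. Since $(x_n)$ is Cauchy, the definition (together with Lemma \ref{lem:Conditionc}) gives $c(x_n,x_m,y,\omega) \to -\infty$, which, after recalling that infinite distances cancel, means precisely that the first coordinate of
\[ crt(x_n,x_m,y,\omega) = (\,d(x_n,x_m) : d(x_n,y) : d(x_m,y)\,) \]
tends to zero once the three non-negative entries are normalised to sum to $2$. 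Here the (symmetry)-condition does the essential work: every accumulation point of $crt(x_n,x_m,y,\omega)$ lies in $\overline{\Ima(crt)} \subseteq \overline{\Delta}$ and has vanishing first coordinate, and the only such point of $\overline{\Delta}$ is $(0:1:1)$. Hence $crt(x_n,x_m,y,\omega) \to (0:1:1)$, which translates into
\[ \frac{d(x_n,y)}{d(x_m,y)} \xrightarrow{n,m \rightarrow \infty} 1. \]

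Finally I would promote this ratio statement to genuine convergence of $d(x_n,y)$. Because $(x_n)$ is bounded, the sequence $d(x_n,y)$ lies in a compact interval $[0,B]$ and therefore has a convergent subsequence $d(x_{n_i},y) \to L$. Substituting $m = n_i$ into the ratio limit yields $d(x_n,y)/d(x_{n_i},y) \to 1$, and sandwiching $d(x_n,y)$ between $(1 \mp \epsilon)d(x_{n_i},y)$ for large $n$ and $i$ shows $d(x_n,y) \to L$. The main obstacle is the middle step: a priori the Cauchy condition only controls the first coordinate of the cross-ratio-triple, and, as Example \ref{exam:ExampleCauchy} demonstrates, for a mere quasi-metric the ratio $\frac{d(x_n,y)d(x_m,z)}{d(x_m,y)d(x_n,z)}$ may genuinely oscillate. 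It is exactly the (symmetry)-condition that rules out accumulation at a spurious boundary point $(0:b:c)$ with $b \neq c$ and thereby pins the limit to $(0:1:1)$; without it the whole argument would collapse.
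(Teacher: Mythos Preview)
Your proof is correct and follows essentially the same route as the paper: invoke the bounded/diverging dichotomy from Lemma \ref{lem:AuxiliaryforCauchy}, use the (symmetry)-condition to upgrade $c(x_n,x_m,y,\omega)\to-\infty$ to $crt(x_n,x_m,y,\omega)\to(0:1:1)$ and hence $\frac{d(x_n,y)}{d(x_m,y)}\to 1$, and then pass from the ratio statement to genuine convergence via a convergent subsequence in the compact interval $[0,B]$. Your explicit treatment of the trivial cases $y=\omega$ and $d(x_n,y)\to 0$, and your remark on why the (symmetry)-condition is indispensable, match the paper's discussion as well.
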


Recall that every sequence $(x_{n,m})$ in $\mathbb{R}$ parametrized by $\mathbb{N}^2$ with the property that $\lim \limits_{n \rightarrow \infty} x_{n,m}$ exists for every $m$, $\lim \limits_{m \rightarrow \infty} x_{n,m}$ exists for every $n$ and $\lim \limits_{n,m \rightarrow \infty} x_{n,m}$ exists, satisfies
\[ \lim_{n \rightarrow \infty} \lim_{m \rightarrow \infty} x_{n,m} = \lim_{m \rightarrow \infty} \lim_{n \rightarrow \infty} x_{n,m} = \lim_{n,m \rightarrow \infty} x_{n,m}. \]

\begin{proof}
Denote the point at infinity with respect to $\rho$ by $\infty$. By Lemma \ref{lem:AuxiliaryforCauchy}, $x_n$ is either bounded or diverges to infinity. If $x_n$ diverges to infinity with respect to $\rho$, then $\rho(x_n, y) \rightarrow \infty$. Now assume the Cauchy sequence $x_n$ is bounded with respect to $\rho$. By Lemma \ref{lem:AuxiliaryforCauchy}, we know that $\rho(x_n, x_m) \xrightarrow{n, m \rightarrow \infty} 0$. In particular, since $\rho$ is a quasi-metric, either $\rho(x_n, y) \xrightarrow{n \rightarrow \infty} 0$, or there exists $\epsilon > 0$, such that $\rho(x_n, y) \geq \epsilon$ for all $n$ sufficiently large. Suppose $\rho(x_n, y)$ does not converge to zero. Then, $y, \infty$ are a good pair for $(x_n)$, $c(x_n, x_m, y, \infty) \xrightarrow{n, m \rightarrow \infty} -\infty$ and by the (symmetry)-condition
\[ crt(x_n,x_m,y,\infty) \xrightarrow{n,m \rightarrow \infty} (0:1:1). \]

This implies that
\begin{equation} \label{eq:ConvergenceCauchy}
\frac{\rho(x_n,y)}{\rho(x_m,y)} \xrightarrow{n,m \rightarrow \infty} 1.
\end{equation}

We can now use this to prove that $\rho(x_n,y)$ converges for every Cauchy sequence $(x_n)$ and any $y \in Z$. If $(x_n)$ converges to $y$, then $\rho(x_n,y) \rightarrow 0$ by definition. If $(x_n)$ diverges to infinity with respect to $\rho$, then $\rho(x_n,y) \rightarrow \infty$. If $(x_n)$ is bounded with respect to $\rho$ then $0 \leq \rho(x_n,y) \leq B$ and hence - by compactness - has a convergent subsequence $\rho(x_{n_i},y)$. Applying equation (\ref{eq:ConvergenceCauchy}) in the case $m = n_i$ yields
\[ \frac{\rho(x_n,y)}{\rho(x_{n_i},y)} \xrightarrow{n,i \rightarrow \infty} 1. \] 
Since $\rho(x_{n_i},y)$ converges, this implies that the limit of $\rho(x_n,y)$ exists and
\[ \lim_{n \rightarrow \infty} \rho(x_n,y) = \lim_{i \rightarrow \infty} \rho(x_{n_i},y). \]

Now consider the two Cauchy-sequences $(x_n)$, $(y_n)$. If one of the sequences is bounded and the other diverges to infinity, then $\rho(x_n, y_n) \rightarrow \infty$. If both sequences diverge to infinity, replace $\rho$ with an involution $\rho_o$ at any point $o \in Z$. Both $(x_n)$ and $(y_n)$ are bounded with respect to $\rho_o$. Convergence of $\rho_o(x_n, y_n)$ and the fact that $\rho$ is the involution of $\rho_o$ at the point $\infty \in Z$ will imply convergence of $\rho(x_n, y_n)$.

We are left to prove convergence of $\rho(x_n, y_n)$ when both sequences are bounded. In this situation, we know that $\rho(x_n, x_m), \rho(y_n, y_m) \xrightarrow{n, m \rightarrow \infty} 0$. Suppose $\rho(x_n, y_n)$ does not converge to zero. Then the limits above and the fact that $\rho$ is a quasi-metric imply that there exists some $\epsilon > 0$, such that for all $n$ sufficiently large, $\rho(x_n, y_n) > \epsilon$. We conclude that
\[ cr(x_n, x_m, y_n, \infty) = \frac{ \rho(x_n, x_m) }{ \rho(x_n, y_n) } \xrightarrow{n, m \rightarrow \infty} 0. \]
Since $crt$ satisfies the (symmetry)-condition, this implies that
\begin{equation} \label{eq:LimitEquation} \frac{ \rho(y_n, x_n) }{ \rho(y_n, x_m) } = cr(y_n, x_n, x_m, \infty) \xrightarrow{n, m \rightarrow \infty}Ê1. \end{equation}
Furthermore, replacing either $n$ or $m$ by a subsequence does not change this convergence behaviour. The same argument with the roles of $(x_n)$ and $(y_n)$ swapped implies
\[ \frac{ \rho(x_n, y_n) }{ \rho(x_n, y_m) } \xrightarrow{n, m \rightarrow \infty} 1.Ê\]
Since $(x_n)$, $(y_n)$ are bounded, there exist subsequences $(x_{n_i})$, $(y_{n_i})$ such that $\rho(x_{n_i}, y_{n_i})$ converges. Equation \ref{eq:LimitEquation} now implies that
\[ \lim_{i, m \rightarrow \infty} \frac{ \rho(y_{n_i}, x_{n_i}) }{ \rho(y_{n_i}, x_m) } = 1 \]
and therefore,
\[ \lim_{i \rightarrow \infty} \rho(y_{n_i}, x_{n_i}) = \lim_{i, n \rightarrow \infty} \rho(y_{n_i}, x_n).Ê\]
Using equation \ref{eq:LimitEquation} with the roles of $(x_n)$, $(y_n)$ swapped, we obtain
\[ \lim_{i, n \rightarrow \infty} \frac{ \rho(x_n, y_n) }{ \rho(x_n, y_{n_i}) } = 1 \]
and therefore,
\[ \lim_{n \rightarrow \infty} \rho(x_n, y_n) = \lim_{i, n \rightarrow \infty} \rho(x_n, y_{n_i}). \]
This implies that $\rho(x_n, y_n)$ converges whenever both sequences are Cauchy-sequences (provided that $\rho$ has a point at infinity).
\end{proof}

\begin{proof}[Proof of Theorem \ref{thm:Completion}]

Let $\overline{Z}$ and $\overline{crt}$ as defined before. We start by proving that $\overline{crt}$ is well-defined. Let $(w_n)$, $(x_n)$, $(y_n)$, $(z_n)$ be Cauchy-sequences in $Z$. By Proposition \ref{prop:Convergence}, $\rho(\cdot_n, \cdot_n)$ converges for any two of the sequences. Therefore, $crt(w_n, x_n, y_n, z_n)$ converges as well and, by the (symmetry)-condition, it converges to a point in $\overline{\Ima(crt)} \subseteq \overline{\Delta}$.

We are left to show that $\lim \limits_{n \rightarrow \infty} crt(w_n,x_n,y_n,z_n) = \lim \limits_{n \rightarrow \infty} crt(w'_n,x'_n,y'_n,z'_n)$ for $(w_n) \sim (w'_n), (x_n) \sim (x'_n), (y_n) \sim (y'_n), (z_n) \sim (z'_n)$. Again, we will prove the statement for $\rho(x_n,y)$ and a quasi-metric $\rho$ that induces $crt$ and has a point at infinity. Repeating this argument then implies as above that the statement for $crt(w_n,x_n,y_n,z_n)$ holds.

So let $\rho$ be a quasi-metric that induces $crt$ and has a point at infinity, denoted by $\infty$. Let $(x_n) \sim (x'_n)$. Since $c(x_n,x'_n,y,z) \rightarrow -\infty$ for all good pairs, it is easy to see that either $\rho(x_n,x'_n) \rightarrow 0$ or $x_n, x'_n$ both diverge to infinity.

Let $y \in Z$. If $(x_n)$ diverges to $\infty$, then $x'_n$ has to diverge to infinity as well and hence $\rho(x_n,y) = \rho(x'_n,y)$ for all $y \in Z$.

Now suppose, $(x_n)$ doesn't diverge to $\infty$, hence it has to be bounded by Lemma \ref{lem:AuxiliaryforCauchy} and $\rho(x_n, x'_n) \xrightarrow{n \rightarrow \infty} 0$. By Proposition \ref{prop:Convergence}, $\rho(x_n,y)$ and $\rho(x'_n,y)$ both converge. Suppose $\rho(x_n,y) \xrightarrow{n \rightarrow \infty} 0$. Then
\[ \rho(x'_n,y) \leq K \max(\rho(x'_n,x_n), \rho(x_n,y)) \xrightarrow{n \rightarrow \infty} 0. \]
Thus, $\lim \limits_{n \rightarrow \infty} \rho(x'_n,y) = 0 = \lim \limits_{n \rightarrow \infty} \rho(x_n,y)$.

Finally, suppose, $\rho(x_n, y) \rightarrow r$ for some positive real number. Then, by swapping $x_n$ and $x'_n$ in the argument above, $\rho(x'_n,y)$ doesn't converge to zero. Therefore and because $(x_n),(x'_n)$ are both bounded, $y, \infty$ is a good pair for both sequences. Since the two sequences are equivalent by assumption, we have
\[ c(x_n,x'_n,y, \infty) \rightarrow -\infty. \]
The (symmetry)-condition implies
\[ crt(x_n,x'_n,y,\infty) \rightarrow (0:1:1). \]
In other words,
\[ crt(x_n,x'_n,y,\infty) = \frac{\rho(x_n,y)}{\rho(x'_n,y)} \rightarrow 1 \]
and therefore
\[ \lim_{n \rightarrow \infty} \rho(x_n,y) = \lim_{n \rightarrow \infty} \rho(x'_n,y). \]

Analogously to the second half of the proof of Proposition \ref{prop:Convergence}, we show that $\lim_{n \rightarrow \infty} \rho(x_n,y_n) = \lim_{nÊ\rightarrow \infty} \rho(x'_n,y_n)$ for all Cauchy sequences $(x_n) \sim (x'_n), (y_n)$. Thus, $\lim_{n \rightarrow \infty} crt(w_n,x_n,y_n,z_n) = \lim_{n \rightarrow \infty} crt(w'_n,x'_n,y'_n,z'_n)$ and therefore, $\overline{crt}$ is well-defined.\\

Given a  M\"obius space $(Z, crt)$ that satisfies the (symmetry)-condition, we have constructed a new  strong M\"obius space $(\overline{Z}, \overline{crt})$. We also have a canonical map of $Z$ into $\overline{Z}$ that preserves the  M\"obius structure (hence it is also a topological embedding).\\

We are left to show that $\overline{Z}$ is complete and that $\overline{Z}$ is unique. We prove completeness first. Let $\xi^m = [(x^{(m)}_n)_n] \in \overline{Z}$ be such that $(\xi^m)_m$ is a Cauchy sequence in $\overline{Z}$. We will often identify $\xi^m$ with the representative $(x^{(m)}_n)$. Choose a quasi-metric $\rho$ on $Z$ that induces $crt$ and let $\overline{\rho}$ be the extension to $\overline{Z}$. Clearly, $\overline{\rho}$ induces $\overline{crt}$. By Lemma \ref{lem:AuxiliaryforCauchy}, $(\xi^m)_m$ either diverges to infinity, or it is bounded with respect to $\overline{\rho}$.

We analyze the point at infinity in $\overline{Z}$ with respect to $\overline{\rho}$. Let it be represented by a Cauchy sequence $(z_n)$ in $Z$. Then $\overline{\rho}((z_n),(y_n)) = \infty$ for all Cauchy sequences $(y_n)$ in $Z$ that are not equivalent to $(z_n)$. This means that

\[ \infty = \overline{\rho}((z_n),(y_n)) = \lim_{n \rightarrow \infty} \rho(z_n,y_n),Ê\]

which is the same as saying that $(z_n)$ diverges to infinity. So the point at infinity with respect to $\overline{\rho}$ is the equivalence class of all sequences in $Z$ that diverge to infinity with respect to $\rho$.\\

Before we study the convergence of our sequence $(\xi^m)_m$, we need to take a look at convergence in the M\"obius topology. Given a strong M\"obius space $(Z', M')$, a sequence $x_n$ in $Z'$ converges to $x$ if and only if for all $\rho_A$ and all $y \in Z'$ such that $y$ does not lie at infinity with respect to $\rho_A$, $\rho_A(x_n, y) \rightarrow \rho_A(x,y)$. By Lemma \ref{lem:cornerconditionatinfinity} and Lemma \ref{lem:InvolutionFormularho_A}, if a M\"obius structure $crt$ is induced by a quasi-metric $\rho$, then the induced semi-metrics $\rho_A$ are quasi-metrics and have the form
\[ \rho_A(x,y) = \frac{\rho(x,y)}{\rho(x, \omega)\rho(\omega,y)} \frac{\rho(\alpha, \omega)\rho(\omega, \beta)}{\rho(\alpha, \beta)}. \]

We see that, as long as $x_n$ does not diverge to infinity with respect to $\rho$, it is sufficient to prove that $\rho(x_n, y) \rightarrow \rho(x,y)$ for all $y$. In particular, since every strong M\"obius structure is induced by a bounded quasi-metric $\rho$ by Proposition \ref{prop:MobiusStructuresQuasiMetrics}, we can simply use such a quasi-metric to study convergence.

Returning to the space $(\overline{Z}, \overline{crt})$ constructed above, if we pick a bounded quasi-metric $\rho$ that induces $crt$, then $\overline{\rho}$ will be a bounded quasi-metric as well. The discussion above implies that a sequence $(\xi^m)_m$ converges to a point $\xi$ if and only if $\overline{\rho}(\xi^m, \eta) \rightarrow \overline{\rho}(\xi,\eta)$ for all $\eta \in \overline{Z}$.\\

Back to the sequence $(\xi^m)_m$. Since we assume $\rho$ to be bounded, any Cauchy-sequence in $(Z, M)$ is bounded with respect to $\rho$. We need to find a Cauchy sequence $(x_l)_l$ in $Z$ such that $(\xi^m)_m$ converges to that sequence in M\"obius topology as $m$ tends to infinity. Since $\rho$ is bounded, $\xi^m = [ (x^{(m)}_n)_n ]$ can be represented by a bounded Cauchy sequence for every $m$. By Lemma \ref{lem:AuxiliaryforCauchy},
\[ \rho(x^{(m)}_n, x^{(m)}_{n'}) \xrightarrow{n,n' \rightarrow \infty} 0. \]
Thus, for every fixed $m$ and every $\epsilon > 0$, we find a natural number $N_m$, such that for all $n, n' \geq N_m$, we have
\begin{equation*} \label{eq:estimatealongn}
\rho(x^{(m)}_n, x^{(m)}_{n'}) < \epsilon.
\end{equation*}
Let $(y_n)$ be a Cauchy sequence in $Z$. Since $\overline{\rho}$ is bounded, the sequence $(\xi^m)_m$ is bounded and we find some constant $B > 0$ such that $\overline{\rho}(\xi^m, (y_n)) < B$ for all $m \in \mathbb{N}$. Therefore, for every $m$ we find some natural number $\overline{N}_m$ such that for all $n \geq \overline{N}_m$, we have
\[\rho(x^{(m)}_n, y_n) \leq 2B.\]
Since $(\xi^m)_m$ is a bounded Cauchy sequence by assumption, we also find for every $\epsilon > 0$ a natural number $M$ such that for all $m, m' \geq M$,
\begin{equation*} \label{eq:estimatealongm}
\overline{\rho}(\xi^m, \xi^{m'}) < \epsilon.
\end{equation*}

We now use the following technical lemma.

\begin{lem}\label{lem:technicallemma}

There exists a sequence $(x_l)_l$ in $Z$, satisfying the following properties:

\begin{enumerate}

\item $x_l = x^{(m_l)}_{n_l}$

\item The sequences $m_l, n_l$ are increasing.

\item For every $l \in \mathbb{N}$ and all $n \geq n_l$, $\rho(x^{(m_l)}_{n_l}, x^{(m_l)}_n) < \frac{1}{lK}$.

\item For every $l \in \mathbb{N}$ and all $m,m' \geq m_l, \overline{\rho}( \xi^m, \xi^{m'} ) \leq \frac{1}{2lK}$.

\item For all $l \leq l' \in \mathbb{N}$ and all $n \geq n_{l'}$, $\rho(x^{(m_l)}_{n}, x^{(m_{l'})}_{n}) < \frac{1}{lK}$.

\end{enumerate}

\end{lem}

We first show how the Lemma completes the proof of Theorem \ref{thm:Completion}. Given such a sequence $(x_l)_l$, one immediately sees that for all $l$ and all $l' \geq l$, we have

\[ \rho(x_l, x_{l'}) = \rho(x^{(m_l)}_{n_l}, x^{(m_{l'})}_{n_{l'}}) \leq K \max ( \rho( x^{(m_l)}_{n_l}, x^{(m_l)}_{n_{l'}} ), \rho( x^{(m_l)}_{n_{l'}}, x^{(m_{l'})}_{n_{l'}} ) ) \leq K \frac{1}{lK} = \frac{1}{l}. \]
This implies that $x_l$ is bounded and a Cauchy sequence. Furthermore, for $m \geq m_{l_0}$
\begin{equation*}
\begin{split}
\overline{\rho}(\xi^m, (x_l)_l) & = \lim_{l \rightarrow \infty} \rho( x^{(m)}_l, x^{(m_l)}_{n_l})\\
& \leq \lim_{l \rightarrow \infty} K^3 \max \Bigl( \rho \left( x^{(m)}_l, x^{(m_{l_0})}_l \right), \rho \left( x^{(m_{l_0})}_l, x^{(m_{l_0})}_{n_{l_0}} \right),\\
& \hspace{78pt} \rho \left( x^{(m_{l_0})}_{n_{l_0}}, x^{(m_{l_0})}_{n_l} \right), \rho \left( x^{(m_{l_0})}_{n_l}, x^{(m_l)}_{n_l} \right) \Bigr) \\
& \leq \lim_{l \rightarrow \infty} \frac{K^2}{ l_0} =  \frac{K^2}{l_0}.
\end{split}
\end{equation*}

Therefore, $\overline{\rho}(\xi^m, (x_l)_l) \xrightarrow{m \rightarrow \infty} 0$ and for any other point $(y_l)_l \in \overline{Z}$, we find $\epsilon_y$ such that $\overline{\rho}(\xi^m, (y_l)_l) > \epsilon_y$ for $m$ sufficiently large. This implies that for all $(y_l)_l, (z_l)_l \in \overline{Z} \diagdown \{ (x_l)_l \}$, we have $\overline{crt}(\xi^m, (x_l)_l, (y_l)_l, (z_l)_l) \xrightarrow{m \rightarrow \infty} (0:1:1)$. Since we assume that $(\xi^m)_m$ does not diverge to infinity, $(x_l)_l \neq \infty$ and we can choose $(z_l)_l = \infty$ (by having chosen the original $\rho$ to have a point at infinity). Then, denoting $y := (y_l)_l, \infty = (\infty)_l$, this limit takes the form
\[ \overline{crt}(\xi^m, (x_l)_l, y, \infty) \xrightarrow{m \rightarrow \infty} (0:1:1). \]
By the definition of $\overline{crt}$ this implies
\[ \frac{\overline{\rho}(\xi^m, y)}{\overline{\rho}((x_l)_l,y)} \xrightarrow{m \rightarrow \infty} 1. \]
In other words,
\[ \lim_{m \rightarrow \infty} \overline{\rho}(\xi^m, y) = \overline{\rho}((x_l)_l, y). \]

This implies that $\xi^m$ converges to $(x_l)$.\\

We are left to prove the technical Lemma and to show that the completion $(\overline{Z}, \overline{crt})$ is unique up to unique M\"obius equivalence. Let $(Z', crt')$ be a complete  strong M\"obius space and $i : Z \hookrightarrow Z'$ a M\"obius embedding, i.e. an injective map that is a M\"obius equivalence onto its image. Further, assume $i(Z)$ is dense in $Z'$ with its M\"obius topology. Denote the canonical inclusion of $Z$ into $\overline{Z}$ by $i_Z$. Since $i, i_Z$ are both injective, we get a bijection $f : i(Z) \rightarrow i_Z(Z)$ which sends $i(x)$ to $i_Z(x)$. Since $i, i_Z$ are M\"obius equivalences onto their images, they are also homeomorphisms onto their images. Therefore, the map $f$ is a homeomorphism with respect to the subspace topology on $i(Z)$ and $i_Z(Z)$. Since $f$ preserves the M\"obius structure and therefore Cauchy-sequences and equivalent Cauchy-sequences, it extends to a bijection $F : Z' \rightarrow \overline{Z}$.

We claim that $F$ is a M\"obius equivalence. Let $(w,x,y,z)$ be a non-degenerate quadruple in $Z'$ (clearly, $F$ preserves the  M\"obius structure on degenerate, admissible quadruples). Then we can approximate these four points by sequences $w_n, x_n, y_n,z_n$ in $i(Z)$. By definition of $F$,
\[ F(w) = \lim_{n \rightarrow \infty} F(w_n) \]
\[ F(x) = \lim_{n \rightarrow \infty} F(x_n) \]
\[ F(y) = \lim_{n \rightarrow \infty} F(y_n) \]
\[ F(z) = \lim_{n \rightarrow \infty} F(z_n) \]
and hence
\begin{equation*}
\begin{split}
\overline{crt}(F(w)F(x)F(y)F(z)) & = \lim_{n \rightarrow \infty} crt(F(w_n)F(x_n)F(y_n)F(z_n))\\
& = \lim_{n \rightarrow \infty} crt(f(w_n)f(x_n)f(y_n)f(z_n))\\
& = \lim_{n \rightarrow \infty} crt'(w_n,x_n,y_n,z_n)\\
& = crt'(w,x,y,z). 
\end{split}
\end{equation*}

This shows that $F$ preserves the  M\"obius structure on non-degenerate quadruples. Hence, $F$ is a M\"obius equivalence. Since all M\"obius equivalences are homeomorphisms, uniqueness follows from the fact that $F \vert_{i(Z)} = f$ is given and the fact that $i(Z)$ is dense in $Z'$. This completes the proof of Theorem \ref{thm:Completion} up to the proof of Lemma \ref{lem:technicallemma}.

\end{proof}

\begin{proof}[Proof of Lemma \ref{lem:technicallemma}]

We are left to construct the sequence $x_l$. We construct $x_l$ inductively. The induction start goes as follows: Since $(\xi^m)_m$ is a bounded Cauchy-sequence, we find natural numbers $M_1 < M_2$, such that
\[ \forall m,m' \geq M_1,\, \overline{\rho}(\xi^m, \xi^{m'}) < \frac{1}{2K} \]
\[ \forall m,m' \geq M_2,\, \overline{\rho}(\xi^m, \xi^{m'}) < \frac{1}{4K}. \]
Now we fix $m = M_1, m' = M_2$. We find a natural number $N_1$ such that
\[ \forall n \geq N_1,\, \rho\left(x^{(M_1)}_n, x^{(M_2)}_n\right) < \frac{1}{K}. \]
Since $(x^{(M_1)}_n)_n$ is a bounded Cauchy sequence in $Z$, we can choose $N_1$ such that additionally,
\[ \forall n,n' \geq N_1,\, \rho\left(x^{(M_1)}_n, x^{(M_1)}_{n'}\right) < \frac{1}{K}. \]
Set
\[ x_1 := x^{(M_1)}_{N_1}. \]
We see that $x_1$ satisfies conditions 3 and 4 from above. Now we do the inductive construction.

Suppose, we are given points $x_1, \dots, x_l$ in $Z$ satisfying properties 1-5. Since $(\xi^m)_m$ is a Cauchy sequence in $\overline{Z}$, we find some $M_{l+1} > m_l$, such that

\[ \forall m,m' \geq M_{l+1}, \overline{\rho}( \xi^m, \xi^{m'}) < \frac{1}{2(l+1)K}. \]

Put $m_{l+1} := M_{l+1}$. Since we have chosen $M_{l+1} > m_l$, condition 2 stays satisfied for $(m_l)_l$. Furthermore, $m_{l+1}$ satisfies condition 4. Since $\xi^{m_{l+1}}$ is a Cauchy sequence, we find some natural number $N_0$, such that

\[ \forall n,n' \geq N_0, \rho(x^{(m_{l+1})}_n, x^{(m_{l+1})}_{n'}) < \frac{1}{(l+1)K}. \]

Thus condition 3 is satisfied, if we choose $n_{l+1} \geq N_0$. By condition 4, we know that for all $i < l+1$, we have

\[ \overline{\rho}( \xi^{m_i}, \xi^{m_{l+1}}) < \frac{1}{2iK}. \]

Therefore, we find some natural numbers $N_i$, such that

\[ \forall n \geq N_i, \rho( x^{(m_i)}_n, x^{(m_{l+1})}_n ) < \frac{1}{iK}. \]

We put $N := \max(N_0, N_1, \dots, N_l, n_l)$ and get

\[ \forall n \geq N, \forall i < l+1, \rho( x^{(m_i)}_n, x^{(m_{l+1})}_n) < \frac{1}{iK}. \]

Put $n_{l+1} := N$ and put

\[ x_{l+1} := x^{(m_{l+1})}_{n_{l+1}}. \]

By the definition of $N$, the sequence $(n_l)_l$ satisfies condition 2. Condition 3 is satisfied as $n_{l+1} \geq N_0$. Condition 4 is satisfied by choice of $m_{l+1}$ Finally, condition 5 is satisfied because $n_{l+1} \geq \max(N_1, \dots, N_l)$. Condition 1 is trivially satisfied and hence we have constructed a sequence with properties 1-5. We have seen before that such a sequence is a Cauchy sequence in $(Z, crt)$ and $(\xi^m)_m$ converges to $(x_l)_l$ in $(\overline{Z}, \overline{crt})$. Hence the Cauchy sequence $(\xi^m)_m$ converges. This implies that $(\overline{Z}, \overline{crt})$ is complete.

\end{proof}

\bibliographystyle{alpha}
\bibliography{mybib}

\begin{thebibliography}{Ham97}

\bibitem[BS07]{BS}
Sergei Buyalo and Viktor Schroeder.
\newblock {\em Elements of asymptotic geometry}.
\newblock EMS Monographs in Mathematics. European Mathematical Society (EMS),
  Z\"{u}rich, 2007.

\bibitem[BS17]{BeyrerSchroeder}
Jonas Beyrer and Victor Schroeder.
\newblock Trees and ultrametric {M}\"{o}bius structures.
\newblock {\em p-Adic Numbers Ultrametric Anal. Appl.}, 9(4):247--256, 2017.

\bibitem[Buy16]{Buyalo}
S.~V. Buyalo.
\newblock M\"obius and sub-m\"obius structures.
\newblock {\em Algebra i Analiz}, 28(5):1--20, 2016.

\bibitem[Ham97]{Hamenstadt97}
Ursula Hamenst\"{a}dt.
\newblock Cocycles, {H}ausdorff measures and cross ratios.
\newblock {\em Ergodic Theory Dynam. Systems}, 17(5):1061--1081, 1997.

\end{thebibliography}

\end{document}